\definecolor{dark-gray}{gray}{0.3}
\definecolor{dkgray}{rgb}{.4,.4,.4}
\definecolor{dkblue}{rgb}{0,0,.5}
\definecolor{medblue}{rgb}{0,0,.75}
\definecolor{rust}{rgb}{0.5,0.1,0.1}
\newtheorem{theorem}{Theorem}[section]
\newtheorem{lemma}[theorem]{Lemma}
\newtheorem{proposition}[theorem]{Proposition}
\newtheorem{corollary}[theorem]{Corollary}
\theoremstyle{definition}
\newtheorem{example}[theorem]{Example}
\newtheorem{remark}[theorem]{Remark}
\numberwithin{equation}{section} 
\numberwithin{figure}{section}
\numberwithin{table}{section}
\numberwithin{recipe}{section}
\providecommand{\mathbold}[1]{\bm{#1}}  % Must be after 'fourier'
\renewcommand{\phi}{\varphi}
\newcommand{\suml}{\sum\limits}
\newcommand{\econst}{\mathrm{e}}
\newcommand{\iunit}{\mathrm{i}}
\newcommand{\Id}{\mathbf{I}}
\providecommand{\mathbbm}{\mathbb} % In case we don't load bbm
\newcommand{\R}{\mathbbm{R}}
\newcommand{\N}{\mathbbm{N}}
\newcommand{\Z}{\mathbbm{Z}}
\newcommand{\diff}[1]{\mathrm{d}{#1}}
\newcommand{\idiff}[1]{\, \diff{#1}}
\newcommand{\Prob}[1]{\mathbbm{P}\left\{{#1}\right\}}
\newcommand{\Expect}{\operatorname{\mathbbm{E}}}
\DeclareMathOperator{\Var}{Var}
\newcommand{\mtx}[1]{\mathbold{#1}}
\newcommand{\triplenorm}[1]{{\left\vert\kern-0.25ex\left\vert\kern-0.25ex\left\vert #1
    \right\vert\kern-0.25ex\right\vert\kern-0.25ex\right\vert}}
\newcommand{\mL}{\mathcal{L}}
\newcommand{\E}{\Expect}
\begin{document}

\title[Bernstein Inequalities for Markov Processes]{Bernstein-type Inequalities for Markov Chains\\ and Markov Processes: A Simple and Robust Proof}

\author[D. Huang, X. Li]{De Huang$^1$, Xiangyuan Li$^2$}
\thanks{$^1$School of Mathematical Sciences, Peking University. E-mail: dhuang@math.pku.edu.cn}
\thanks{$^2$School of Mathematical Sciences, Peking University. E-mail: lixiangyuan23@stu.pku.edu.cn}

\begin{abstract}
We establish a new Bernstein-type deviation inequality for general (non-reversible) discrete-time Markov chains via an elementary approach. More robust than existing works in the literature, our result only requires the Markov chain to satisfy an iterated Poincar\'e inequality. Moreover, our method can be readily generalized to continuous-time Markov processes.
\end{abstract}

\maketitle

\section{Introduction}
Concentration inequalities describe the probability that the average of a sequence of random variables is close to its expected value. Motivated by the profound impact of these inequalities on many scientific applications, significant efforts have been made to extend concentration results from independent random variables to dependent ones. In particular, the concentration of Markov chains has been an extensively studied topic. Let $\{Z_k\}_{k\in\mathbb{N}_+}$ be a discrete-time Markov chain on a state space $\Omega$ with a unique invariant distribution $\mu$. Let $L_{2,\mu}$ be the Hilbert space of all real-valued functions on $\Omega$ endowed with the inner product $\langle f,g \rangle_{\mu}=\mathbb{E}_{\mu}[f\cdot g]$,
and let $L_{2,\mu}^0=\{f \in L_{2,\mu}: \E_\mu[f]=0\}$ be the mean-zero subspace of $L_{2,\mu}$. Our goal of this paper is to bound the deviation probability $\Prob{|\frac{1}{n}\sum_{k=1}^n f(Z_k)| \geq \delta}$ for any bounded function $f\in L_{2,\mu}^0$.

In general, the convergence speed of the empirical mean $\frac{1}{n}\sum_{k=1}^n f(Z_k)$ depends on certain spectral properties of the corresponding transition operator $\mtx{P}$. In particular, ergodicity (in different senses) for non-reversible Markov chains has been proved to be guaranteed and quantified by various notions of spectral gaps of $\mtx{P}$ \cite{fill1991eigenvalue,montenegro2006mathematical,kontoyiannis2012geometric,paulin2015concentration,chatterjee2025spectral}. In this paper, we use the smallest nonzero singular value of $\mtx{\Id}-\mtx{P}$ (with respect to $L_{2,\mu}$) to quantify the ergodicity of $\{Z_k\}_{k\in\mathbb{N}_+}$ and establish a Bernstein-type inequality for Markov chains with a nonzero iterated Poincar\'e gap. To be specific, we define the \textit{iterated Poincar\'e gap} (IP gap) of $\mtx{P}$ as
\begin{equation}
    \eta_p:=\inf\limits_{h \in L_{2,\mu}^0,\ h \neq \mtx{0}}\frac{\|(\mtx{\Id}-\mtx{P})h\|_{\mu}}{\|h\|_{\mu}}.
\end{equation}
Here and below, $\|h\|_{\mu}=\langle h,h \rangle_{\mu}^{1/2}$. Then for any $\delta>0$ and any function $f$ such that $\E_{\mu}[f]=0$, $|f|\leq M$, and $\Var_\mu[f]\leq \sigma^2$, we can obtain the following Bernstein-type tail bound:
\begin{equation}\label{eqt:mainresult_simple}
\Prob{\left|\frac{1}{n}\sum\limits_{k=1}^n f(Z_k) \right| \geq \delta } \lesssim \exp\left(-\frac{n\,\eta_p\,\delta^2}{4M\sqrt{(2+6\eta_p)^2\sigma^2+\delta^2}}\right).
\end{equation}
We call $\eta_p$ the iterated Poincar\'e gap since it is related to the iterated Poincar\'e inequality:
\begin{equation}
    \Var_\mu[h] \leq \eta_p^{-2 }\E_\mu\left[\left(\left(\mtx{\Id}-\mtx{P}\right) h\right)^2\right],\quad  \text{for any}\,\ h\in L_{2,\mu}.
\end{equation}
The definition of this IP gap coincides with that of the non-reversible spectral gap first formally introduced by Chatterjee in \cite{chatterjee2025spectral}, where he used this quantity to establish ergodicity for non-reversible Markov chains, generalizing similar results for reversible Markov chains based on the usual spectral gap. For instance, Chatterjee showed that the empirical variance along a non-reversible Markov chain of length $n$ can be bounded by $1/(n\eta_p)$, in the same spirit as in the reversible case. The reader may thus also refer to the IP gap as the \textit{Chatterjee gap} or the \textit{non-reversible spectral gap} and can find a more comprehensive introduction to the topic in \cite{chatterjee2025spectral}. Nevertheless, we want to remark that the proof ideas and the results of this paper are novel and were developed independently and unaware of the work \cite{chatterjee2025spectral} by Chatterjee\footnote{The core techniques and the main results of this paper were already established by the first author DH in 2020 and were introduced to Joel Tropp (Caltech) in a private communication in the same year. We then spent a few years trying to improve the estimates and to extend the results to non-commutative (matrix) settings (though failed) before we finally released this paper in 2024.}, and we use the name ``iterated Poincar\'e gap'' to emphasize the critical but simple use of the iterated Poincar\'e inequality in our proof. 

There have been numerous studies that establish concentration inequalities for Markov chains in terms of certain spectral gaps of the transition operators \cite{lezaud1998chernoff,leon2004optimal,chung2012chernoff,miasojedow2014hoeffding,paulin2015concentration,rao2019hoeffding,jiang2018bernstein,fan2021hoeffding}.
When $\{Z_k\}_{k\in\mathbb{N}_+}$ is reversible, the corresponding $\mtx{P}$ is self-adjoint under the $L_{2,\mu}$ inner product. In this case, the spectral gap of $\mtx{P}$ is given by 
\begin{equation}\label{eqt:definition_of_spectral_gap}
\eta:=1-\sup _{h \in L_{2,\mu}^0,\ h \neq \mtx{0}} \frac{\langle h,\mtx{P}h \rangle_{\mu}}{\|h\|_{\mu}^2}.
\end{equation}
In other words, $\eta$ is the gap between $1$ and the second largest eigenvalue of $\mtx{P}$ (when the spectrum of $\mtx{P}$ is well-defined). It is well-known that, in the reversible case, a nonzero spectral gap of $\mtx{P}$ implies ergodicity and concentration of the Markov chain.

For non-reversible Markov chains, various notions of spectral gaps have been proposed as substitutes for the usual spectral gap in the study of concentration inequalities. A commonly used one is the \textit{absolute $L_2$-spectral gap} (or simply the \textit{absolute spectral gap}), which is defined as
\begin{equation}\label{eqt:absolute_operator_gap}
\eta_a:= 1-\sup _{h \in L_{2,\mu}^0,\ h \neq \mtx{0}}  \frac{\|\mtx{P} h\|_{\mu}}{\|h\|_{\mu}}.
\end{equation}
If $\mtx{P}$ is considered as a linear operator on $L_{2,\mu}$, then $\eta_a$ is the gap between $1$ and the operator norm of $\mtx{P}$ over the subspace $L_{2,\mu}^0$ (the latter is also the second largest singular value of $\mtx{P}$ under the $L_{2,\mu}$ inner product). Alternatively, $\eta_a$ can be understood as the usual spectral gap of the self-adjoint operator $(\mtx{P}^*\mtx{P})^{1/2}$, where $\mtx{P}^*$ is the adjoint of $\mtx{P}$ with respect to the $L_{2,\mu}$ inner product. Moreover, in some literature (e.g. \cite{lezaud1998chernoff}) an alternative definition, 
\begin{equation}
\eta_m:= 1-\sup _{h \in L_{2,\mu}^0,\ h \neq \mtx{0}}  \frac{\|\mtx{P} h\|_{\mu}^2}{\|h\|_{\mu}^2},
\end{equation}
is employed, which is the usual spectral gap of $\mtx{P}^*\mtx{P}$, the multiplicative reversiblization of $\mtx{P}$ (see also \cite{fill1991eigenvalue,montenegro2006mathematical,chatterjee2025spectral}). It is easily seen that these two gaps $\eta_a$ and $\eta_m$ are always comparable since $\eta_m = 1-(1-\eta_a)^2 = (2-\eta_a)\eta_a$ by definition, so it does not matter which definition is used. To the best of our knowledge, most of the existing concentration inequalities for non-reversible Markov chains require a nonzero $\eta_a$ (or $\eta_m$) \cite{lezaud1998chernoff,miasojedow2014hoeffding,rao2019hoeffding,jiang2018bernstein,fan2021hoeffding}. Unfortunately, though a nonzero $\eta_a$ implies ergodicity of the Markov chain, the reverse is not true in general: a nonzero absolute spectral gap cannot be guaranteed even for irreducible Markov chains (Simple examples can be found in Appendix \ref{sec:simple_example}). 

Note that in some literature \cite{paulin2015concentration,levin2017markov,chatterjee2025spectral} the name ``absolute spectral gap'' refers to a different quantity defined as 
\begin{equation}
\gamma_a := 1-|\tilde{\lambda}_2(\mtx{P})|,
\end{equation}
where $\tilde\lambda_2(\mtx{P})$ denotes the second largest eigenvalue of $\mtx{P}$ in absolute value. This definition of $\gamma_a$ coincides with that of $\eta_a$ in \eqref{eqt:absolute_operator_gap} for reversible Markov chains. However, it is not so favorable to establish concentration inequalities in terms of $\gamma_a$ for non-reversible Markov chains. In principle, one can first bound the deviation probability (the left-hand side of \eqref{eqt:mainresult_simple}) by $\exp(-\delta^2/O(t_{\mathrm{mix}}) )$ (see e.g. \cite{paulin2015concentration}), where $t_{\mathrm{mix}}$ is the so called mixing time of the Markov chain, and then bound $t_{\mathrm{mix}}$ from above by a multiple of $1/\gamma_a$ \cite{paulin2015concentration,jerison2013general}. Nevertheless, it is proved in \cite{jerison2013general} that $t_{\mathrm{mix}}\lesssim |\Omega|/\gamma_a$ (where $|\Omega|$ is the size of the state space $\Omega$) for general Markov chains with an example showing the sharpness of this bound. Such undesirable dependence on the size of the state space makes $\gamma_a$ a less useful quantity in the non-reversible case. Therefore, we will not discuss $\gamma_a$ in this paper, and we will always refer to $\eta_a$ defined in \eqref{eqt:absolute_operator_gap} as the absolute spectral gap.

A generalization of the absolute spectral gap, called the \textit{pseudo spectral gap}, was first introduced in \cite{paulin2015concentration} to establish concentration inequalities for non-reversible Markov chains. It is defined as 
\begin{equation}
\eta_{ps} := \sup_{k\in \N_+}\frac{1}{k}\left(1- \sup _{h \in L_{2,\mu}^0,\ h \neq \mtx{0}}  \frac{\|\mtx{P}^k h\|_{\mu}^2}{\|h\|_{\mu}^2}\right) = \sup_{k\in \N_+}\frac{\eta\big((\mtx{P}^*)^k\mtx{P}^k\big)}{k},
\end{equation}
where $\eta((\mtx{P}^*)^k\mtx{P}^k)$ is the usual spectral gap of the self-adjoint operator $(\mtx{P}^*)^k\mtx{P}^k$. Though it could be difficult to compute this quantity exactly, one can obtain an easy lower bound by taking the supremum over $k\in\{1,\dots,K\}$ for some finite $K$. Nevertheless, this relatively more complicated definition of the pseudo spectral gap makes it less user-friendly in practice. Advanced theoretical studies, estimations, and applications of the pseudo spectral gap can be found in \cite{wolfer2019estimating,wolfer2024improved}. 

Another way to define the spectral gap for a non-reversible Markov chain is by simply using the usual spectral gap of the self-adjoint operator $(\mtx{P}+\mtx{P}^*)/2$, the additive symmetrization of $\mtx{P}$ (see \cite{fill1991eigenvalue,montenegro2006mathematical,chatterjee2025spectral}). To be specific, the \textit{symmetric spectral gap} of $\mtx{P}$ is defined as 
\begin{equation}
\eta_s:=1-\sup _{h \in L_{2,\mu}^0,\ h \neq \mtx{0}} \frac{\langle h,\mtx{P}h \rangle_{\mu}}{\|h\|_{\mu}^2}=1-\sup _{h \in L_{2,\mu}^0,\ h \neq \mtx{0}} \frac{\left\langle h,\frac{1}{2}\left(\mtx{P}+\mtx{P}^*\right)h \right\rangle_{\mu}}{\|h\|_{\mu}^2},
\end{equation}
which can be seen as a natural generalization of \eqref{eqt:definition_of_spectral_gap}. This quantity is employed to obtain concentration inequalities in \cite{fan2021hoeffding,jiang2018bernstein}. However, there seems to be a technical gap in their proof of a critical lemma involving the symmetric spectral gap (which will be explained and discussed in Appendix \ref{sec:gap}). Other than these unjustified results, the symmetric spectral gap has not been used in concentration inequalities for non-reversible Markov chains.

Note that in the reversible case, it always holds that $\eta=\eta_p=\eta_s$, and if the second largest eigenvalue of $\mtx{P}$ in absolute value is positive, then it also holds that $\eta_p = \eta_a$. The reason we prefer the IP gap is that the relations $\eta_p\geq \eta_s\geq \eta_a$ (Lemma \ref{lem:gap_comparison}) and $\eta_p\geq \eta_{ps}/2$ (see \cite{chatterjee2025spectral}) always hold for any non-reversible Markov chain, meaning that the IP gap is the most robust one among all these commonly used gaps for quantifying ergodicity. In fact, for any finite-state irreducible Markov chain, the corresponding IP gap $\eta_p$ is always nonzero. Moreover, there are simple examples where $\eta_p>0$ but the ratios $\eta_a/\eta_p$ and $\eta_{ps}/\eta_p$ can be arbitrarily small or even zero (see Appendix \ref{sec:simple_example}). Consequently, our results are more robust, user-friendly, and applicable to a much wider range of Markov chains. 

We establish \eqref{eqt:mainresult_simple} by recursively bounding the moment-generating function of $\sum_{k=1}^n f(Z_k)$ in a way that only an iterated Poincar\'e inequality is needed, providing a more straightforward and elementary method compared to existing approaches. Furthermore, our method can be naturally generalized to continuous-time Markov processes. To be specific, let $(Z_t)_{t\geq0}$ be a continuous-time Markov process on $\Omega$ with an invariant distribution $\mu$, and let $(\mtx{P}_t)_{t\geq 0}$ be the associated Markov semigroup. In this case, we can similarly define the IP gap $\eta_p$ for $(\mtx{P}_t)_{t\geq 0}$ via an iterated Poincar\'e inequality and establish the following Bernstein-type inequality for any function $f$ that satisfies the same assumptions as above:
\begin{equation}
\Prob{\left|\frac{1}{t}\int_0^t f(Z_s)\idiff s\right| \geq \delta } \lesssim \exp\left(-\frac{t\,\eta_p\,\delta^2}{4M\sqrt{4\sigma^2+\delta^2}}\right), \quad \text{for any}\,\ \delta \geq 0.
\end{equation}
We remark that though the proof in the continuous-time case is parallel to that in the discrete-time case, the former is cleaner and easier to read for some technical reasons. We hence suggest reading the proof in the continuous-time case first to quickly get to the key idea of our approach.\\

The rest of this paper is organized as follows. In Section \ref{sec:diecrete}, we prove the Bernstein-type inequality \eqref{eqt:mainresult_simple} for discrete-time Markov chains. Section \ref{sec:continuous} extends our result to continuous-time Markov processes. Appendix \ref{sec:simple_example} discusses the relations between the IP gap and the other spectral gaps mentioned above. Finally, a potential gap in the proofs in \cite{fan2021hoeffding,jiang2018bernstein} is discussed in Appendix \ref{sec:gap}.

\section{Concentration inequalities for discrete-time Markov chains}\label{sec:diecrete}
In this section, we establish our Bernstein-type inequality \eqref{eqt:mainresult_simple} for discrete-time Markov chains.

\subsection{Setting}
Throughout this paper, $\Omega$ is a Polish space equipped with a probability measure $\mu$. $L_{2,\mu}$ denotes the Hilbert space of all real-valued functions on $\Omega$ endowed with the inner product 
\begin{equation}
    \langle f,g \rangle_{\mu}=\mathbb{E}_{\mu}[\,f\cdot g\,],
\end{equation}
and $L_{2,\mu}^0=\{f \in L_{2,\mu}: \E_\mu[f]=0\}$ denotes the mean-zero subspace of $L_{2,\mu}$. We define $\|f\|_\mu := \langle f,f \rangle_{\mu}^{1/2}$. Correspondingly, we have 
\[\Var_\mu[f] = \E_\mu[(f-\E_\mu f)^2] = \|f-\E_\mu f\|_\mu^2.\]

Let $\{Z_k\}_{k\in \mathbb{N}_+}$ be a Markov chain (not necessarily reversible) on $\Omega$ whose invariant distribution is $\mu$, and let $\mtx{P}$ be the associated transition operator given by
\[\mtx{P}h(z) = \E\left[h(Z_2)\,|\,Z_1 = z\right],\quad \text{for any} \,\ z\in \Omega.\]
By the definition of the invariant distribution, we have 
\begin{equation}
    \E_{\mu}[\mtx{P}h]= \E_{\mu}[h], \quad \text{for any $h\in L_{2,\mu}$}.
\end{equation}
Let $\mtx{L}$ be the Laplacian operator of $\mtx{P}$ given by
\begin{equation}
\mtx{L}h(z) := (\mtx{P}-\mtx{\Id})h(z), \quad \text{for all $z\in \Omega$},
\end{equation}
where $\Id$ is the identity operator. We define the \textit{iterated Poincar\'e gap (IP gap)} of $\mtx{P}$ as
\begin{equation}
    \eta_p:=\inf\limits_{h \in L_{2,\mu}^0, h \neq \mtx{0}}\frac{\|\mtx{L}h\|_{\mu}}{\|h\|_{\mu}}.
\end{equation}
When $\{Z_k\}_{k\in \mathbb{N}_+}$ is a Markov chain on a finite state space, $\mtx{L}$ is a finite-dimensional matrix, and the corresponding IP gap $\eta_p$ is the second smallest singular value of $\mtx{L}$ under the $L_{2,\mu}$ inner product. 

In what follows, we will assume $\mtx{P}$ admits an IP gap $\eta_p>0$. That is, there exists a constant $\eta_p>0$ such that an iterated Poincar\'e inequality holds on $L_{2,\mu}$:
\begin{equation}\label{eqt:2ndPoincare_discrete}
\Var_\mu[h] \leq \eta_p^{-2 }\E_\mu\left[\left(\mtx{L} h\right)^2\right],  \quad\text{for any $h\in L_{2,\mu}$}.
\end{equation}

\subsection{Exponential moment bound} Let $f: \Omega\to \mathbb{R}$ be a function such that $\E_\mu [f] = 0$, $|f|\leq M$, and $\Var_\mu[f]\leq \sigma^2$. The goal of this subsection is to bound the moment generating function (mgf) of $\sum_{k=1}^{n}f(Z_k)$, which is defined as
\begin{equation}
m(\theta):= \E_{Z_1\sim\mu}\left[\econst^{\theta \sum_{k=1}^{n}f(Z_k) }\right].
\end{equation}
To simplify notations, we will first consider the case $\theta=1$. After that, we can bound $m(\theta)$ for arbitrary $\theta>0$ by replacing the function $f$ with $\theta f$. 
For $n\geq 1$, define
\[F\left(Z_{n_1:n_2}\right) := \sum\limits_{k=n_1}^{n_2}f(Z_k) \quad \text{and}\quad G_n(z) := \E\left[\econst^{ F(Z_{1:n})}\,|\, Z_1 = z\right],\quad z\in \Omega.\]
Our goal is to bound the quantity
\begin{equation}
a_n := \E_{Z_1\sim\mu}\left[\econst^{F(Z_{1:n})}\right]= \E_{z\sim\mu}[G_n(z)].
\end{equation}

To start, we compute the action of $\mtx{L}$ on the function $G_n$ as follows.

\begin{lemma}\label{lem:L_of_Gn} It holds that 
\[\mtx{L} G_n(z) = \int_0^1\E\left[(f(Z_{n+1})-f(Z_1))\econst^{sF(Z_{2:(n+1)}) + (1-s)F(Z_{1:n})}\,|\,Z_1 = z\right]\idiff s.\]
\end{lemma}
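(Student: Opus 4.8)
The plan is to unfold $\mtx{L} G_n = (\mtx{P}-\Id)G_n$ by a direct computation, using the Markov property to identify $\mtx{P}G_n$, and then to convert the resulting difference of exponentials via an elementary integral identity.

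First I would compute $\mtx{P}G_n(z)$. By definition $\mtx{P}G_n(z) = \E\left[G_n(Z_2)\,|\,Z_1 = z\right]$, and since the chain is time-homogeneous, $G_n(y) = \E\left[\econst^{F(Z_{1:n})}\,|\,Z_1 = y\right]$ together with the Markov property at time $2$ gives $\E\left[\econst^{F(Z_{2:(n+1)})}\,|\,Z_{1:2}\right] = \E\left[\econst^{F(Z_{2:(n+1)})}\,|\,Z_2\right] = G_n(Z_2)$. Taking the conditional expectation given $Z_1 = z$ and using the tower property yields $\mtx{P}G_n(z) = \E\left[\econst^{F(Z_{2:(n+1)})}\,|\,Z_1 = z\right]$. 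Subtracting $G_n(z) = \E\left[\econst^{F(Z_{1:n})}\,|\,Z_1 = z\right]$ then gives
\[\mtx{L}G_n(z) = \E\left[\econst^{F(Z_{2:(n+1)})} - \econst^{F(Z_{1:n})}\,\big|\,Z_1 = z\right].\]

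Next I would use that $F(Z_{2:(n+1)}) - F(Z_{1:n}) = f(Z_{n+1}) - f(Z_1)$, since the two partial sums share the common terms $f(Z_2),\dots,f(Z_n)$. Then I would invoke the identity $\econst^{b} - \econst^{a} = (b-a)\int_0^1 \econst^{sb+(1-s)a}\idiff s$, valid for all real $a,b$: for $b\neq a$ it follows from $\int_0^1\econst^{a+s(b-a)}\idiff s = (\econst^{b}-\econst^{a})/(b-a)$, and the case $b=a$ is trivial. Applying this pointwise with $a = F(Z_{1:n})$ and $b = F(Z_{2:(n+1)})$ rewrites the integrand above as $(f(Z_{n+1}) - f(Z_1))\int_0^1 \econst^{sF(Z_{2:(n+1)}) + (1-s)F(Z_{1:n})}\idiff s$.

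Finally, since $|f|\leq M$ the integrand is uniformly bounded on the compact interval $[0,1]$, so Fubini's theorem permits exchanging the conditional expectation with the integral in $s$, which produces exactly the claimed formula. I expect the only step requiring genuine care to be the first one — correctly applying time-homogeneity and the Markov property to re-express $G_n(Z_2)$ as a conditional expectation of $\econst^{F(Z_{2:(n+1)})}$; the remaining steps are elementary calculus and a routine application of Fubini.
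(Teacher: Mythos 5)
Your proof is correct and follows essentially the same route as the paper: unfold $\mtx{L}G_n = \mtx{P}G_n - G_n$, use the Markov property and time-homogeneity to write $\mtx{P}G_n(z) = \E\big[\econst^{F(Z_{2:(n+1)})}\,\big|\,Z_1=z\big]$, and apply the identity $\econst^b - \econst^a = (b-a)\int_0^1 \econst^{sb+(1-s)a}\idiff s$ before exchanging the integral with the conditional expectation. Your extra care with the tower property and the Fubini step merely spells out what the paper leaves implicit.
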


\begin{proof} By the definition of $\mtx{L}$, we can compute that 
\begin{align*}
\mtx{L} G_n(z) &= \E\left[G_n(Z_2)\,|\,Z_1 = z\right] - G_n(z) \\
&= \E\left[\econst^{F(Z_{2:(n+1)})} - \econst^{F(Z_{1:n})}\,\big|\,Z_1 = z\right] \\
&= \E\left[(f(Z_{n+1})-f(Z_1))\int_0^1\econst^{sF(Z_{2:(n+1)}) + (1-s)F(Z_{1:n})}\idiff s\,\Big|\,Z_1 = z\right],
\end{align*}
which is the claimed result.
\end{proof}

With the above lemma, we can bound $a_n$ as follows given that $f$ satisfies some smallness condition.

\begin{proposition}\label{prop:bound_of_a_n}
Let $f: \Omega\to \mathbb{R}$ be a function such that $\E_\mu [f] = 0$, $|f|\leq M$, and $\Var_\mu[f]\leq \sigma^2$. Suppose that  $1-4M^2/\eta_p^2>0$, then for any $n\in \mathbb{N}_+$,
\begin{equation}
a_n:= \E_{Z_1\sim\mu}\left[\econst^{F(Z_{1:n})}\right]\leq \exp\left(n\sigma M\cdot\frac{2+6\eta_p}{c\,\eta_p} \right),
\end{equation}
where $c:=(1-4M^2/\eta_p^2)^{1/2}$.
\end{proposition}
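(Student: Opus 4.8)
The plan is to set up a recursive inequality for the sequence $a_n$ using Lemma~\ref{lem:L_of_Gn} together with the iterated Poincaré inequality~\eqref{eqt:2ndPoincare_discrete}, and then to solve (or bound) that recursion to obtain exponential growth in $n$ with the stated rate. The key observation is that $a_n = \E_\mu[G_n]$, so if I can relate $a_n$ to $a_{n-1}$ (or to $a_{n+1}$) through $\mtx{L}G_n$, I can iterate. Note that $\E_\mu[\mtx{L}G_n] = \E_\mu[\mtx{P}G_n] - \E_\mu[G_n] = 0$ by invariance of $\mu$, so $\mtx{L}G_n$ is automatically mean-zero and the iterated Poincaré inequality applies to $G_n$ itself: $\Var_\mu[G_n] \leq \eta_p^{-2}\,\E_\mu[(\mtx{L}G_n)^2]$.

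First I would estimate $\E_\mu[(\mtx{L}G_n)^2]$ using the formula from Lemma~\ref{lem:L_of_Gn}. Since $|f(Z_{n+1}) - f(Z_1)| \leq 2M$, and the exponential weight $\econst^{sF(Z_{2:(n+1)}) + (1-s)F(Z_{1:n})}$ is a convex-combination interpolation between $\econst^{F(Z_{1:n})}$ and $\econst^{F(Z_{2:(n+1)})}$ (shifted by one step), I expect a bound of the rough shape $|\mtx{L}G_n(z)| \lesssim M \cdot (\text{something like } \mtx{P}G_n \text{ or } G_{n+1})$, after pulling the factor $2M$ out and using Jensen on the $s$-integral. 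Squaring and taking $\E_\mu$, and using $\E_\mu[(\mtx{P}h)^2] \leq \E_\mu[h^2]$ (which follows from Jensen/contractivity of $\mtx{P}$ on $L_{2,\mu}$), this should give $\E_\mu[(\mtx{L}G_n)^2] \lesssim M^2 \, a_{\text{something}}^{\,2}$ up to constants — more precisely I anticipate needing to track $\E_\mu[G_n^2]$ and relate it back to $a_n = \E_\mu[G_n]$ via $\E_\mu[G_n^2] = \Var_\mu[G_n] + a_n^2$, which is exactly where the iterated Poincaré inequality re-enters and where the condition $1 - 4M^2/\eta_p^2 > 0$ (equivalently $c^2 = 1 - 4M^2/\eta_p^2 > 0$) becomes essential: it lets me absorb the $\Var_\mu[G_n]$ term and conclude $\E_\mu[G_n^2] \leq c^{-2} a_n^2$ or similar.

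Next, with control on $\E_\mu[G_n^2]$ in hand, I would go back and derive the actual recursion. Writing $a_{n+1} - a_n = \E_\mu[G_{n+1}] - \E_\mu[G_n]$ and expressing this difference through an expectation of $(f(Z_{n+1}) - f(Z_1))$ against an interpolating exponential (again via the fundamental theorem of calculus, as in Lemma~\ref{lem:L_of_Gn}), I would use Cauchy–Schwarz to split off the $L_{2,\mu}$ norm of $f$ — here $\Var_\mu[f] \leq \sigma^2$ enters, contributing the factor $\sigma$ — against the $L_{2,\mu}$ norm of the exponential weight, which is controlled by $\sqrt{\E_\mu[G_n^2]} \leq c^{-1} a_n$. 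The mean-zero property $\E_\mu[f] = 0$ is what makes the Cauchy–Schwarz bound give $\sigma$ rather than $M$. This yields something like $a_{n+1} \leq a_n\bigl(1 + \sigma M \cdot \frac{2 + 6\eta_p}{c\,\eta_p}\bigr)$, and then $a_n \leq \bigl(1 + \sigma M \cdot \frac{2+6\eta_p}{c\eta_p}\bigr)^n \leq \exp\bigl(n\sigma M \cdot \frac{2+6\eta_p}{c\eta_p}\bigr)$ using $1 + x \leq \econst^x$, which is exactly the claimed bound.

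The main obstacle I expect is bookkeeping the shift by one time step: $\mtx{L}G_n$ involves $F(Z_{2:(n+1)})$, which is the empirical sum started one step later, so $G_{n+1}$ and "$\mtx{P}$ applied to a shifted version of $G_n$" are not literally the same object, and I will need to use stationarity of $\mu$ carefully to identify the relevant expectations and make the recursion close cleanly. A secondary subtlety is getting the numerical constants $2$ and $6\eta_p$ to come out exactly right — the $6\eta_p$ term presumably arises from bounding a cross-term where the interpolation parameter $s$ interacts with the Poincaré constant, and tracking it requires being slightly careful about which exponential ($\econst^{F(Z_{1:n})}$ vs.\ $\econst^{F(Z_{2:(n+1)})}$) dominates at each step and how much the one-step shift costs.
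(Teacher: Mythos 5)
Your overall architecture does match the paper's (a recursion on $a_n$, the pointwise bound $|\mtx{L}G_n|\le M(\mtx{P}G_n+G_n)$ from Lemma~\ref{lem:L_of_Gn}, and the absorption step giving $\E_\mu[G_n^2]\le a_n^2/c^2$), but the step that actually drives the proposition is missing from your recursion. You propose to Cauchy--Schwarz the mean-zero $f$ against the exponential weight and to control that weight by $\sqrt{\E_\mu[G_n^2]}\le a_n/c$. As described, this yields only $a_{n+1}\le(1+\sigma/c)\,a_n$: a second-moment bound on $G_n$ cannot produce the factor $M/\eta_p$ in the claimed increment. The point of the argument is that after centering, $\E_\mu[fG_n]=\E_\mu\big[f(G_n-\E_\mu[G_n])\big]\le\sigma\,\Var_\mu[G_n]^{1/2}$, and then the iterated Poincar\'e inequality applied to $G_n$, combined with the lemma, gives $\Var_\mu[G_n]^{1/2}\le\eta_p^{-1}\big(\E_\mu[(\mtx{L}G_n)^2]\big)^{1/2}\le\frac{2M}{c\,\eta_p}\,a_n$. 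It is this chain --- variance, then Poincar\'e, then the pointwise $\mtx{L}G_n$ bound --- that makes the per-step increment of order $\frac{\sigma M}{c\,\eta_p}a_n$; without it, after the rescaling $f\mapsto\theta f$ the exponent is linear rather than quadratic in $\theta$, and the downstream Chernoff/Bernstein bound collapses. Your first paragraph derives exactly the needed ingredients ($\E_\mu[(\mtx{L}G_n)^2]\le 4M^2\E_\mu[G_n^2]\le 4M^2a_n^2/c^2$), but you never feed them back into the Cauchy--Schwarz, so the asserted recursion $a_{n+1}\le a_n\big(1+\sigma M\tfrac{2+6\eta_p}{c\,\eta_p}\big)$ does not follow from the steps you describe.

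A secondary but real problem is your difference identity. Integrating the interpolating exponential against the weight $f(Z_{n+1})-f(Z_1)$ computes $\E_\mu[\mtx{L}G_n]$, which is $0$ by stationarity --- it is not $a_{n+1}-a_n$. The paper instead uses stationarity to write $a_n=\E_{Z_1\sim\mu}\big[\econst^{F(Z_{2:(n+1)})}\big]$, so the telescoped difference carries the weight $f(Z_1)$ alone; since the resulting interpolant is not exactly $G_n$, it is expanded as $\E_\mu[fG_n]$ plus a remainder bounded by $4M\,\E_\mu[|f|G_n]$ via the elementary inequality $(\econst^{2M}-\econst^{M})/M-1\le 4M$ (valid because $\eta_p\le 2$ forces $M<1$ under the standing assumption $4M^2<\eta_p^2$), and $\E_\mu[|f|G_n]$ requires both the Poincar\'e term and an extra $\sigma a_n$ because $|f|$ is not centered ($\E_\mu[|f|]\le\sigma$). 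These two contributions are precisely where the constant $2+6\eta_p$ comes from; this is genuine argument rather than constant bookkeeping, and your version of the difference formula would have to be corrected before any of it could be carried out.
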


\begin{proof}
By the definition of $\eta_p$, it is apparent that $\eta_p\leq 2$. Owing to the assumption $1-4M^2/\eta_p^2>0$, one has $M<\eta_p/2\leq 1$. We shall bound $a_n$ by recursion. For any $n\geq 1$, we have
\begin{equation}\label{eqt:discrete_step_1}
\begin{split}
a_{n+1} - a_n
&= \E_{Z_1\sim\mu}\left[\econst^{F\left(Z_{1:(n+1)}\right)}\right] - \E_{Z_1\sim\mu}\left[\econst^{F(Z_{1:n})}\right]\\
&= \E_{Z_1\sim\mu}\left[\econst^{F\left(Z_{1:(n+1)}\right)}\right] - \E_{Z_1\sim\mu}\left[\econst^{F(Z_{2:(n+1)})}\right]\\
&= \int_0^1\E_{Z_1\sim\mu}\left[f(Z_1)\econst^{F\left(Z_{1:n}\right)+f(Z_{n+1})-(1-s)f(Z_1)}\right]\idiff s \\
&= \E_{Z_1\sim\mu}\left[f(Z_1)\econst^{F(Z_{1:n})}\right] + \int_0^1\E_{Z_1\sim\mu}\left[f(Z_1)(\econst^{f(Z_{n+1})-(1-s)f(Z_1)}-1)\econst^{F(Z_{1:n})}\right]\idiff s\\
&\leq \E_{Z_1\sim\mu}\left[f(Z_1)\econst^{F(Z_{1:n})}\right] +  \int_0^1\left(\econst^{M(2-s)}-1\right)\idiff s\cdot\E_{Z_1\sim\mu}\left[|f(Z_1)|\econst^{F(Z_{1:n})}\right]\\
&\leq \E_{z\sim\mu}\left[f(z)G_n(z)\right] + 4M\cdot \E_{z\sim\mu}\left[|f(z)|G_n(z)\right],
\end{split}
\end{equation}
where the last inequality is due to the numerical fact that 
\begin{equation}
\frac{\econst^{2M}-\econst^{M}}{M}-1\leq 4M, \quad \text{for any $M\in(0,1]$}. 
\end{equation}
Since $\E_\mu [f] = 0$, we have
\begin{equation}\label{eqt:discrete_step_2}
\E_{\mu}[fG_n] = \E_{\mu}\left[f\left(G_n-\E_{\mu}[G_n]\right)\right] \leq \Var_\mu[f]^{1/2}\Var_\mu[G_n]^{1/2} \leq \frac{\sigma}{\eta_p} \left(\E_\mu\big[(\mtx{L} G_n)^2\big]\right)^{1/2},
\end{equation}
where the last inequality follows from \eqref{eqt:2ndPoincare_discrete} and the assumption $\Var_\mu[f]\leq \sigma^2$. Similarly, we deduce
\begin{equation}\label{eqt:discrete_step_3}
\E_{\mu}[|f|G_n] = \E_{\mu}\left[|f|\left(G_n-\E_{\mu}[G_n]\right)\right]+ \E_{\mu}\left[|f|\E_{\mu}[G_n]\right]\leq \frac{\sigma}{\eta_p} \left(\E_\mu\big[(\mtx{L} G_n)^2\big]\right)^{1/2}+\sigma\cdot a_n.
\end{equation}
By Lemma \ref{lem:L_of_Gn}, we can compute that 
\begin{align*}
|\mtx{L} G_n(z)| &\leq \int_0^1\E\left[|f(Z_{n+1})-f(Z_1)|\econst^{sF(Z_{2:(n+1)}) + (1-s)F(Z_{1:n})}\,|\,Z_1 = z\right]\idiff s \\
&\leq 2M \int_0^1\E\left[s\econst^{F(Z_{2:(n+1)})} + (1-s)\econst^{F(Z_{1:n})}\,|\,Z_1 = z\right]\idiff s \\
&= M(\mtx{P}G_n(z) + G_n(z)).
\end{align*}
It follows that
\[\E_\mu\left[(\mtx{L} G_n)^2\right]\leq M^2 \E_\mu\left[(\mtx{P}G_n + G_n)^2\right]\leq 2M^2\E_\mu\left[(\mtx{P}G_n)^2\right] +  2M^2\E_\mu\big[G_n^2\big]\leq 4M^2\E_\mu\big[G_n^2\big].\]
To continue, we use the above result to compute that 
\[\E_\mu\big[G_n^2\big]= \Var_\mu[G_n] + \left(\E_\mu[G_n]\right)^2 \leq  \frac{1}{\eta_p^2}\E_\mu\left[(\mtx{L} G_n)^2\right] + a_n^2 \leq  \frac{4M^2}{\eta_p^2}\E_\mu\big[G_n^2\big] + a_n^2.\]
Using the assumption $c^2= 1 - 4M^2/\eta_p^2>0$, we obtain 
\[\E_\mu[G_n^2]\leq \frac{a_n^2}{c^2}.\] 
Finally, we have
\begin{equation}
\left(\E_\mu\big[(\mtx{L} G_n)^2\big]\right)^{1/2}\leq 2M\left(\E_\mu\big[G_n^2\big]\right)^{1/2}\leq \frac{2M}{c}\cdot a_n.
\end{equation}
We then combine \eqref{eqt:discrete_step_1}, \eqref{eqt:discrete_step_2}, \eqref{eqt:discrete_step_3}, and the above to obtain
\begin{align*}
a_{n+1} &\leq a_n + \frac{2\sigma M}{c\eta_p}\cdot a_n + 4\sigma M\left(\frac{2M}{c\eta_p}+1\right)\cdot a_n \\
&\leq \left(1+\frac{2\sigma M}{c\eta_p}+\frac{4\sigma M\big(\sqrt{1-c^2}+c\big)}{c}\right)\cdot a_n\\
&\leq a_n\cdot \exp\left( \sigma M\cdot\frac{2+6\eta_p}{c\,\eta_p} \right).
\end{align*}
We have used the fact that $\sqrt{1-c^2}+c\leq \sqrt{2}<3/2$ for any $c\in[0,1]$. Unrolling this recursion yields $a_n\leq \econst^{(n-1)\sigma M(2+6\eta_p)/(c\eta_p)}a_1$. In fact, using the same argument we can show that $a_1\leq \econst^{\sigma M(2+6\eta_p)/(c\eta_p)}$. Finally, we obtain 
\[a_n\leq \exp\left( n\sigma M\cdot\frac{2+6\eta_p}{c\,\eta_p} \right)\] 
for all $n\geq 1$.
\end{proof}

As a direct corollary, we can bound the mgf of $\sum_{k=1}^{n}f(Z_k)$ for any $0<\theta<\eta_p/(2M)$. With $f$ replaced by $\theta f$, the smallness assumption on $M$ in Proposition \ref{prop:bound_of_a_n} now becomes a smallness condition on $\theta$.

\begin{corollary}\label{cor:discrete_mgf_bound}
Let $f: \Omega\to \mathbb{R}$ be a function such that $\E_\mu [f] = 0$, $|f|\leq M$, and $\Var_\mu[f]\leq \sigma^2$. For any $\theta\in \R$ with $|\theta|<\eta_p/(2M)$, it holds that 
\begin{equation}
\E_{Z_1\sim\mu}\left[\econst^{\theta F(Z_{1:n})}\right]\leq \exp\left(n\sigma M\theta^2 \cdot\frac{2+6\eta_p}{c(\theta)\,\eta_p} \right),
\end{equation}
where $c(\theta):=(1-4\theta^2M^2/\eta_p^2)^{1/2}$.
\end{corollary}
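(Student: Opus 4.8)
The plan is to reduce Corollary~\ref{cor:discrete_mgf_bound} to Proposition~\ref{prop:bound_of_a_n} by a simple rescaling of the function $f$. Fix $\theta \in \R$ with $|\theta| < \eta_p/(2M)$ and set $g := \theta f$. Then $g$ still has mean zero under $\mu$, and it satisfies the pointwise bound $|g| \leq |\theta| M =: M'$ and the variance bound $\Var_\mu[g] = \theta^2 \Var_\mu[f] \leq \theta^2 \sigma^2 =: (\sigma')^2$. The condition $|\theta| < \eta_p/(2M)$ is exactly the statement that $M' = |\theta| M < \eta_p/2$, i.e. $1 - 4(M')^2/\eta_p^2 = 1 - 4\theta^2 M^2/\eta_p^2 > 0$, which is precisely the smallness hypothesis required by Proposition~\ref{prop:bound_of_a_n} with $c = c(\theta)$.

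Next I would apply Proposition~\ref{prop:bound_of_a_n} to the function $g$. Writing $F_g(Z_{1:n}) = \sum_{k=1}^n g(Z_k) = \theta \sum_{k=1}^n f(Z_k) = \theta F(Z_{1:n})$, the proposition gives
\[
\E_{Z_1\sim\mu}\!\left[\econst^{\theta F(Z_{1:n})}\right] = \E_{Z_1\sim\mu}\!\left[\econst^{F_g(Z_{1:n})}\right] \leq \exp\!\left(n\,\sigma'\,M'\cdot\frac{2+6\eta_p}{c(\theta)\,\eta_p}\right).
\]
Substituting $\sigma' M' = (|\theta|\sigma)(|\theta|M) = \theta^2 \sigma M$ yields exactly the claimed bound, with $c(\theta) = (1 - 4\theta^2 M^2/\eta_p^2)^{1/2}$ as defined.

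One small point worth checking is that Proposition~\ref{prop:bound_of_a_n} is stated for $\sigma, M > 0$ (or at least its proof tacitly uses $M \in (0,1]$ via the numerical inequality), so I should address the degenerate cases: if $\theta = 0$ both sides equal $1$ and the inequality is trivial; and if $f \equiv 0$ a.e.\ (so $M$ or $\sigma$ can be taken to be $0$) the left side is again $1$ and the right side is $\geq 1$. Otherwise $\theta f$ is a genuine nonzero bounded mean-zero function with the stated parameters, and the proposition applies verbatim. Since the argument is a pure change of variables, I do not anticipate any real obstacle here — the only thing to be careful about is bookkeeping the constants so that $M \mapsto |\theta| M$, $\sigma \mapsto |\theta|\sigma$, and confirming that the hypothesis $1 - 4M^2/\eta_p^2 > 0$ of the proposition transforms into $|\theta| < \eta_p/(2M)$, which it does exactly.
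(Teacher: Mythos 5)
Your proposal is correct and is essentially identical to the paper's own proof, which also sets $g=\theta f$, notes $|g|\leq|\theta|M$ and $\Var_\mu[g]\leq\theta^2\sigma^2$, and invokes the argument of Proposition~\ref{prop:bound_of_a_n} with the hypothesis $1-4\theta^2M^2/\eta_p^2>0$ translating exactly from $|\theta|<\eta_p/(2M)$. Your extra remarks on the degenerate cases $\theta=0$ or $f\equiv 0$ are harmless but not needed.
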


\begin{proof}
Let $g=\theta f$, then $g$ satisfies $\E_\mu [g] = 0$, $|g|\leq \theta M$, and $\Var_\mu[g]\leq \theta^2\sigma^2$. One then repeats the proof of Proposition \ref{prop:bound_of_a_n} with $g$ in place of $f$ to get the claimed result.
\end{proof}

\subsection{Bernstein-type tail bound}
Using the preceding bound on the mgf of $\sum_{k=1}^n f(Z_k)$, we can now establish our Bernstein-type inequality for discrete-time Markov chains as follows.

\begin{theorem}\label{thm:Bernstein_discrete}
Let $\left\{Z_k\right\}_{k\in \mathbb{N}_+}$ be a Markov chain on $\Omega$ with an invariant distribution $\mu$. Suppose that the corresponding transition operator $\mtx{P}$ admits an IP gap $\eta_p>0$. Let $\nu$ be the initial distribution of $\left\{Z_k\right\}_{k\in \mathbb{N}_+}$, i.e. $Z_1\sim\nu$, such that $\nu$ is absolutely continuous with respect to $\mu$ and
that $\nu/\mu$ has a finite $p$-moment for some $p\in[1,+\infty]$. Let $f\in \Omega\to \mathbb{R}$ be a function such that $\E_\mu [f] = 0$, $|f|\leq M$, and $\Var_\mu[f]\leq \sigma^2$ for some $M,\sigma\in (0,+\infty)$. Then, for all $n\in\N_+$ and all $\delta\geq 0$,
\begin{equation}
\Prob{\left|\frac{1}{n}\sum\limits_{k=1}^nf(Z_k) \right| \geq \delta } \leq 2\|\nu/\mu\|_{L_{p,\mu}}\exp\left(-\frac{n\,\eta_p\,\delta^2}{4qM\sqrt{\left(2+6\eta_p\right)^2\sigma^2+\delta^2}}\right),
\end{equation}
where $q=p/(p-1)\in[1,+\infty]$.
\end{theorem}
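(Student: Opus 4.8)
The plan is to derive the tail bound from the moment generating function estimate in Corollary~\ref{cor:discrete_mgf_bound} by a Chernoff-type argument, with one extra twist to accommodate the initial distribution $\nu \neq \mu$. First I would handle the change of measure: for any nonnegative random variable depending on the trajectory, $\E_{Z_1 \sim \nu}[X] = \E_{Z_1 \sim \mu}[(\nu/\mu)(Z_1)\, X]$, so by H\"older's inequality with exponents $p$ and $q = p/(p-1)$ we get $\E_{Z_1 \sim \nu}[X] \leq \|\nu/\mu\|_{L_{p,\mu}} \big(\E_{Z_1 \sim \mu}[X^q]\big)^{1/q}$. Applying this with $X = \econst^{\theta F(Z_{1:n})}$ reduces the problem to controlling $\E_{Z_1 \sim \mu}[\econst^{q\theta F(Z_{1:n})}]$, which is exactly the mgf bound from Corollary~\ref{cor:discrete_mgf_bound} evaluated at $q\theta$ (valid when $|q\theta| < \eta_p/(2M)$).

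Next I would run the standard exponential Markov inequality. For the upper tail, $\Prob{\frac{1}{n}\sum_{k=1}^n f(Z_k) \geq \delta} = \Prob{\econst^{\theta F(Z_{1:n})} \geq \econst^{\theta n \delta}} \leq \econst^{-\theta n \delta} \E_{Z_1 \sim \nu}[\econst^{\theta F(Z_{1:n})}]$ for $\theta > 0$; combining with the two previous steps yields a bound of the form
\begin{equation*}
\|\nu/\mu\|_{L_{p,\mu}} \exp\left(-\theta n \delta + \frac{n \sigma M q \theta^2 (2+6\eta_p)}{c(q\theta)\,\eta_p}\right),
\end{equation*}
where $c(q\theta) = (1 - 4q^2\theta^2 M^2/\eta_p^2)^{1/2}$. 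The lower tail is symmetric (replace $f$ by $-f$, which satisfies the same hypotheses), giving the factor $2$. It then remains to choose $\theta$ to make the exponent as negative as possible, or at least as negative as the claimed bound requires.

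The main obstacle is the optimization over $\theta$ in the presence of the $c(q\theta)$ denominator, which blows up as $q\theta \to \eta_p/(2M)$; this is what produces the characteristic Bernstein $\sqrt{\sigma^2 + \delta^2}$ behavior rather than a clean Gaussian bound. The clean way to proceed is to \emph{not} optimize exactly but to make a judicious suboptimal choice: pick $\theta$ so that $c(q\theta)$ stays bounded below by an absolute constant (say $c(q\theta) \geq 1/\sqrt{2}$, i.e. $q\theta M/\eta_p \leq 1/(2\sqrt{2})$), and within that safe range minimize the quadratic-in-$\theta$ exponent $-\theta n \delta + C n \theta^2$ with $C = \sqrt{2}\,\sigma M q(2+6\eta_p)/\eta_p$. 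The unconstrained minimizer is $\theta^* = \delta/(2C)$; if this lies in the safe range, substituting gives exponent $-n\delta^2/(4C)$, matching the target up to the bookkeeping inside the square root; if $\theta^*$ exceeds the safe threshold, one instead takes $\theta$ equal to the threshold, and then the $\delta^2$ term under the square root dominates, again reproducing the stated form. I would organize this as a two-case analysis and verify that in both cases the resulting exponent is at most $-n\eta_p\delta^2 / \big(4qM\sqrt{(2+6\eta_p)^2\sigma^2 + \delta^2}\big)$; the inequality $\sqrt{a} + \sqrt{b} \le \sqrt{2}\sqrt{a+b}$ type manipulations and the crude bound $\eta_p \le 2$ should absorb the constants. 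Care is needed to check the admissibility condition $|q\theta| < \eta_p/(2M)$ is implied by the chosen $\theta$ in every case, but this is exactly what the safe-range restriction guarantees.
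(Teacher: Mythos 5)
Your opening steps---the change of measure with H\"older's inequality, the exponential Markov inequality, the appeal to Corollary~\ref{cor:discrete_mgf_bound} at parameter $q\theta$, and the symmetrization $f\mapsto -f$ for the lower tail---coincide exactly with the paper's proof. The gap is in the final step, the choice of $\theta$: the theorem asserts an explicit constant, and your two-case ``safe range'' choice does not reach it. In your first case (where the unconstrained minimizer lies in the range $c(q\theta)\ge 1/\sqrt{2}$, which works out to $\delta\le(2+6\eta_p)\sigma$), the exponent you obtain is $-n\eta_p\delta^2/\bigl(4\sqrt{2}\,qM(2+6\eta_p)\sigma\bigr)$, and for this to be at most the target $-n\eta_p\delta^2/\bigl(4qM\sqrt{(2+6\eta_p)^2\sigma^2+\delta^2}\,\bigr)$ you would need $\sqrt{(2+6\eta_p)^2\sigma^2+\delta^2}\ge\sqrt{2}\,(2+6\eta_p)\sigma$, i.e.\ $\delta\ge(2+6\eta_p)\sigma$---precisely the opposite of the case you are in. So in the small-deviation regime your exponent is weaker than the stated one by a factor approaching $\sqrt{2}$, and no $\sqrt{a}+\sqrt{b}\le\sqrt{2}\sqrt{a+b}$ bookkeeping or $\eta_p\le 2$ bound can recover it: any \emph{fixed} threshold $c(q\theta)\ge c_0<1$ caps your exponent at $c_0$ times the unconstrained Gaussian rate, while matching the stated constant as $\delta/\sigma\to 0$ requires $c(q\theta)$ arbitrarily close to $1$. (Your second case, the threshold choice for $\delta>(2+6\eta_p)\sigma$, does in fact dominate the stated exponent, so the shortfall is only in the first regime; if the theorem carried an unspecified absolute constant, your plan would suffice.)

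The fix is to drop the two-case analysis and make the single $\delta$-dependent choice the paper makes,
\[
\theta=\frac{\delta\,\eta_p}{2qM\sqrt{(2+6\eta_p)^2\sigma^2+\delta^2}},
\]
which is admissible because it gives the closed form $c(q\theta)=(2+6\eta_p)\sigma/\sqrt{(2+6\eta_p)^2\sigma^2+\delta^2}>0$. Substituting this into your exponent $-\theta n\delta+n\sigma Mq\theta^2(2+6\eta_p)/(c(q\theta)\eta_p)$, the quadratic term equals exactly half of $\theta n\delta$, so the exponent collapses, with no slack, to $-n\eta_p\delta^2/\bigl(4qM\sqrt{(2+6\eta_p)^2\sigma^2+\delta^2}\,\bigr)$, which is the claimed bound; everything else in your write-up then goes through unchanged.
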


\begin{proof} For any $\theta>0$, we have
\begin{align*}
\Prob{\frac{1}{n}\sum\limits_{k=1}^nf(Z_k) \geq \delta } &\leq \econst^{-\theta n\delta} \E_{Z_1\sim\nu}\left[\econst^{\theta F(Z_{1:n})}\right]\\
&=\econst^{-\theta n\delta} \E_{Z_1\sim\mu}\left[\left(\nu(Z_1)/\mu(Z_1)\right)\econst^{\theta F(Z_{1:n})}\right]\\
&\leq \econst^{-\theta n\delta} \|\nu/\mu\|_{L_{p,\mu}}\left(\E_{Z_1\sim\mu}\left[\econst^{q\theta F(Z_{1:n})}\right]\right)^{1/q}.
\end{align*}
We can use Corollary \ref{cor:discrete_mgf_bound} to deduce
\[\E_{Z_1\sim\mu}\left[\econst^{q\theta F(Z_{1:n})}\right]\leq  \exp\left(n\sigma Mq^2\theta^2 \cdot\frac{2+6\eta_p}{c(q\theta)\,\eta_p} \right),\]
given that $c(q\theta)^2=1 - 4q^2\theta^2M^2/\eta_p^2> 0$. In particular, we may choose
\begin{equation}
\theta = \frac{\delta\eta_p}{2qM\sqrt{\left(2+6\eta_p\right)^2\sigma^2+\delta^2}}\,,
\end{equation}
which gives 
\[c(q\theta) = \frac{(2+6\eta_p)\sigma}{\sqrt{\left(2+6\eta_p\right)^2\sigma^2+\delta^2}}.\]
With this choice of $\theta$, we obtain
\begin{align*}
\Prob{\frac{1}{n}\sum\limits_{k=1}^nf(Z_k) \geq \delta }&\leq  \|\nu/\mu\|_{L_{p,\mu}}\exp\left(- n\delta\theta+  \frac{n\sigma M q\theta^2\left(2+6\eta_p\right)}{c(q\theta)\, \eta_p }\right)\\ 
&=\|\nu/\mu\|_{L_{p,\mu}} \exp\left(-\frac{n\eta_p\delta^2}{4qM\sqrt{\left(2+6\eta_p\right)^2\sigma^2+\delta^2}}\right).
\end{align*}
We can derive the same bound for $\Prob{\frac{1}{n}\sum_{k=1}^nf(Z_k) \leq -\delta}$ in a similar way. Combining these estimates yields the desired bound on the whole deviation probability with an extra factor of $2$.
\end{proof}

Recall that the classical Bernstein-type inequalities for general Markov chains, e.g. those obtained in \cite{lezaud1998chernoff,jiang2018bernstein}, take the typical form
\begin{equation}\label{eqt:traditional_result}
    \Prob{\left|\frac{1}{n}\sum\limits_{k=1}^n f(Z_k) \right| \geq \delta } \lesssim \exp\left(-\frac{C\,n\eta_a\delta^2}{\sigma^2+M\delta}\right),
\end{equation}
where $C$ is some universal constant, and $\eta_a$ is the absolute spectral gap. Paulin \cite{paulin2015concentration} obtained an improvement of \eqref{eqt:traditional_result} in the sense that $\eta_a$ is replaced by the pseudo spectral $\eta_{ps}$. In comparison, our Bernstein-type inequality takes the form
\begin{equation}\label{eqt:our_result}
    \Prob{\left|\frac{1}{n}\sum\limits_{k=1}^n f(Z_k) \right| \geq \delta } \lesssim \exp\left(-\frac{C\,n\eta_p\delta^2}{M(\sigma+\delta)}\right).
\end{equation}
In practice, the deviation $\delta$ of the empirical mean is typically of order $1/\sqrt{n}$ and thus is much smaller than $\sigma$ when $n$ is large. Besides, when $\sigma\ll M$, Bernstein-type inequalities (\eqref{eqt:traditional_result} and \eqref{eqt:our_result}) are more preferable than Hoeffding-type inequalities (as those in \cite{miasojedow2014hoeffding,rao2019hoeffding,fan2021hoeffding} that also use $\eta_a$). For parameters in this range, i.e. $\delta\ll \sigma \ll M$, the traditional result \eqref{eqt:traditional_result} (with a variance proxy $\sigma^2$) could be sharper than our result \eqref{eqt:our_result} (with a variance proxy $M\sigma$). However, the significance of our new result \eqref{eqt:our_result} lies in that it only relies on an IP gap $\eta_p$ of $\mtx{P}$, while \eqref{eqt:traditional_result} relies on an absolute spectral $\eta_a$ or a pseudo spectral gap $\eta_{ps}$. In Appendix \ref{sec:simple_example}, we show that $\eta_p\geq \eta_a$ and $\eta_p\geq \eta_{ps}/2$ for any (non-reversible) Markov transition operator $\mtx{P}$. Furthermore, when $\mtx{P}$ is a finite-state irreducible transition matrix, $\eta_p$ is always nonzero, while $\eta_a$ or $\eta_{ps}$ can be arbitrarily small compared to $\eta_p$ or even zero (see the examples in Appendix \ref{sec:simple_example}). In this sense, our result \eqref{eqt:our_result} is more robust than the classical one \eqref{eqt:traditional_result}. 

Nevertheless, it would still be great to improve our result by establishing a Bernstein inequality in the standard form \eqref{eqt:traditional_result} but also with the IP gap $\eta_p$ in use. Unfortunately, though this difficulty seems to be merely technical, our current method does not seem to be able to overcome it. We hope to dig into this problem in future works.

\section{Concentration inequalities for continuous-time Markov processes}\label{sec:continuous}
In this section, we generalize our Bernstein-type inequality to continuous-time Markov processes.

\subsection{Setting}
Let $\Omega$, $\mu$, and $L_{2,\mu}$ be defined as in the previous section. Let $(Z_t)_{t\geq0}$ be a Markov process (not necessarily reversible) on $\Omega$ whose stationary distribution is $\mu$, and let $(\mtx{P}_t)_{t\geq 0}$ be the associated Markov semigroup given by
\[\mtx{P}_th(z) = \E\left[h(Z_t)\,|\,Z_0=z\right],\quad \text{for all $z\in \Omega$}.\]
 Let $\mL$ be the infinitesimal generator of $(\mtx{P}_t)_{t\geq 0}$:
\[\mL h(z) := \lim_{t\rightarrow 0+0} \frac{\mtx{P}_th(z)-h(z)}{t},\quad \text{for all $z\in \Omega$}.\]
We say $(\mtx{P}_t)_{t\geq 0}$ admits an IP gap $\eta_p>0$, if the following iterated Poincar\'e inequality holds on $L_{2,\mu}$:
\begin{equation}\label{eqt:2ndPoincare_continuous}
\Var_\mu[h] \leq \eta_p^{-2 }\E_\mu\left[\left(\mL h\right)^2\right],\quad \text{for any $h\in L_{2,\mu}$}.
\end{equation}
In the following, we will always assume $(\mtx{P}_t)_{t\geq 0}$ admits an IP gap $\eta_p>0$.

\subsection{Exponential moment bound}
Again, let $f\in \Omega\to \mathbb{R}$ be such that $\E_\mu [f] = 0$, $|f|\leq M$, and $\Var_\mu[f]\leq \sigma^2$. To avoid non-essential discussions, it is also reasonable to assume that $f(Z_t)$ is weakly right-continuous in $t$: for any $z\in \Omega$, any $s\geq 0$, and any bounded random variable $X = X(\{Z_t\}_{t\geq 0})$,
\begin{equation}\label{eqt:weak_continuous}
\lim_{\tau\rightarrow 0+0} \E\left[f(Z_{s+\tau})X\big|\, Z_0=z\right] = \E\left[f(Z_s)X\big|\, Z_0=z\right].
\end{equation}
Note that this condition only requires mild regularity of $f$ and $\{Z_t\}_{t\geq 0}$ and is satisfied in most of the practical applications.

The goal of this subsection is to bound the mgf of $\int_0^tf(Z_s)\idiff s$, which is defined as
\begin{equation}
m(\theta):=\E_{Z_0\sim\mu}\left[\econst^{\theta\int_0^tf(Z_s)\idiff s}\right].
\end{equation}
Similar to the discrete-time case, we first consider the case $\theta=1$.
For $t\geq 0$, define
\[F\left(Z_{[a,b]}\right) := \int_a^bf(Z_s)\idiff s\quad \text{and}\quad G_t(z) := \E\left[\econst^{ F\left(Z_{[0,t]}\right)}\,\big|\, Z_0 = z\right],\quad z\in \Omega.\]
Our goal is to bound the quantity
\[a(t) := \E_{Z_0\sim\mu}\left[\econst^{F\left(Z_{[0,t]}\right)}\right]= \E_{z\sim\mu}[G_t(z)],\quad t\geq0.\]
We can establish a continuous version of Lemma \ref{lem:L_of_Gn} as follows.

\begin{lemma}\label{lem:L_of_G} It holds that 
\[\mL G_t(z) = \E\left[(f(Z_t)-f(Z_0))\econst^{F\left(Z_{[0,t]}\right)}\,|\, Z_0 = z\right].\]
\end{lemma}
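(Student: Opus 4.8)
The plan is to mimic the discrete-time computation in Lemma~\ref{lem:L_of_Gn}, but with sums replaced by an integral and with the generator $\mL$ appearing through the Kolmogorov equations for the semigroup rather than through the finite difference $\mtx{P}-\mtx{\Id}$. The cleanest route is to write $\mL G_t(z) = \frac{d}{dh}\big|_{h=0^+}\mtx{P}_h G_t(z)$ and expand $\mtx{P}_h G_t(z) = \E[\,G_t(Z_h)\mid Z_0=z\,]$ using the Markov property. Concretely, conditioning on $Z_h$ and using $G_t(Z_h)=\E[\econst^{F(Z_{[h,t+h]})}\mid Z_h]$ together with the tower property gives
\[
\mtx{P}_h G_t(z) = \E\big[\econst^{F(Z_{[h,t+h]})}\,\big|\, Z_0=z\big],
\]
so that
\[
\mtx{P}_h G_t(z) - G_t(z) = \E\big[\econst^{F(Z_{[h,t+h]})} - \econst^{F(Z_{[0,t]})}\,\big|\, Z_0=z\big].
\]
Now $F(Z_{[h,t+h]}) - F(Z_{[0,t]}) = \int_t^{t+h} f(Z_s)\idiff s - \int_0^h f(Z_s)\idiff s$, which is $O(h)$ since $|f|\le M$; dividing by $h$ and sending $h\to 0^+$, the first integral contributes $f(Z_t)\econst^{F(Z_{[0,t]})}$ and the second contributes $-f(Z_0)\econst^{F(Z_{[0,t]})}$, by the (almost sure) right-continuity of $s\mapsto f(Z_s)$ at the endpoints and dominated convergence. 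This yields exactly $\mL G_t(z) = \E[(f(Z_t)-f(Z_0))\econst^{F(Z_{[0,t]})}\mid Z_0=z]$.

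Alternatively, and perhaps more robustly if one wants to avoid pointwise path regularity, I would use the identity $\mtx{P}_h - \Id = \int_0^h \mtx{P}_s \mL \idiff s$ (or its adjoint form $\mL\mtx{P}_s$) combined with the backward equation $\frac{d}{dt}\mtx{P}_t = \mtx{P}_t\mL = \mL\mtx{P}_t$, and differentiate the family $G_t$ in $t$ to get a transport-type identity; but the endpoint-difference computation above is the most direct. In either presentation, after obtaining the first-order term one writes $\econst^{b}-\econst^{a} = (b-a)\int_0^1 \econst^{sb+(1-s)a}\idiff s$ only if one wants the integral form as in the discrete case — here, because the increment is infinitesimal, the integral over $s$ collapses and no auxiliary $s$-variable survives, which is why the continuous statement is cleaner than Lemma~\ref{lem:L_of_Gn}.

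The main obstacle is justifying the interchange of limit, conditional expectation, and the time-integral defining $F$: one needs that $h^{-1}\big(\econst^{F(Z_{[h,t+h]})}-\econst^{F(Z_{[0,t]})}\big)$ is uniformly bounded (which follows from $|f|\le M$ and $t$ fixed, giving a bound like $\econst^{Mt}\cdot 2M$) so that dominated convergence applies inside $\E[\,\cdot\mid Z_0=z]$, and that $\frac1h\int_t^{t+h} f(Z_s)\idiff s \to f(Z_t)$ and $\frac1h\int_0^h f(Z_s)\idiff s \to f(Z_0)$ in a suitable sense. The latter holds pointwise along paths at Lebesgue points, and in particular almost surely for a.e.\ $t$ under mild right-continuity assumptions on the process; since the generator $\mL$ is itself only defined as a limit, it is natural to restrict to $G_t$ in the domain of $\mL$ and interpret the identity in that setting. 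I would state these as the standing regularity assumptions already implicit in the definition of $\mL$ on $L_{2,\mu}$ and not belabor them, exactly as the paper does for the discrete case.
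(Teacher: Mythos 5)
Your proposal is correct and follows essentially the same route as the paper: both express $\mL G_t(z)$ as the limit of $\tau^{-1}(\mtx{P}_\tau G_t(z)-G_t(z))$, use the Markov property to write $\mtx{P}_\tau G_t(z)=\E\big[\econst^{F(Z_{[\tau,t+\tau]})}\,\big|\,Z_0=z\big]$, and differentiate at $\tau=0$ to produce the factor $f(Z_t)-f(Z_0)$. Your remarks on dominated convergence and endpoint right-continuity only make explicit the regularity that the paper's formal differentiation leaves implicit.
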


\begin{proof} By the definition of $\mL$, we can compute that 
\begin{align*}
\mL G_t(z) &= \lim_{\tau\rightarrow 0+0}\frac{\mtx{P}_\tau G_t(z) - G_t(z)}{\tau}\\
&= \lim_{\tau\rightarrow 0+0} \E\left[\frac{G_t(Z_\tau)-G_t(Z_0)}{\tau}\,\Big|\,Z_0 = z\right] \\
&=  \lim_{\tau\rightarrow 0+0} \E\left[\frac{\econst^{F(Z_{[\tau,t+\tau]})}-\econst^{F(Z_{[0,t]})}}{\tau}\,\Big|\,Z_0 = z\right] \\
&= \E\left[(f(Z_t)-f(Z_0))\econst^{F\left(Z_{[0,t]}\right)}\,\big|\, Z_0 = z\right],
\end{align*}
which is the claimed result. We have used \eqref{eqt:weak_continuous} for calculating the limit above.
\end{proof}

Next, we can bound $a(t)$ using Gr\"onwall's inequality. To be specific, we have the following estimate.

\begin{proposition}\label{prop:bound_of_a_t}
Let $f:\Omega\to \mathbb{R}$ be a function such that $\E_\mu [f] = 0$, $|f|\leq M$, and $\Var_\mu[f]\leq \sigma^2$. Suppose that  $1-4M^2/\eta_p^2>0$, then for any $t\geq 0$,
\begin{equation}
a(t):= \E_{Z_0\sim\mu}\left[\econst^{F\left(Z_{[0,t]}\right)}\right]\leq \exp\left(\frac{2\sigma M\,t}{c\,\eta_p}\right),
\end{equation}
where $c:=(1-4M^2/\eta_p^2)^{1/2}$.
\end{proposition}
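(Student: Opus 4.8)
The plan is to mirror the discrete-time argument of Proposition~\ref{prop:bound_of_a_n}, replacing the finite recursion on $a_n$ with a differential inequality on $a(t)$ that is then closed via Gronwall. First I would differentiate $a(t) = \E_{z\sim\mu}[G_t(z)]$ in $t$. Since $\E_\mu[\mtx{P}_\tau h] = \E_\mu[h]$ for all $\tau$, stationarity gives $\E_\mu[\mL G_t] = 0$, so the naive derivative of the mean vanishes; instead I would compute $a'(t)$ directly from $F(Z_{[0,t]}) = \int_0^t f(Z_s)\idiff s$, obtaining $a'(t) = \E_{z\sim\mu}\big[f(z)\,G_t(z)\big] = \E_\mu[f\,G_t]$ after using stationarity to shift the base point (exactly as in the telescoping step $a_{n+1}-a_n$ of the discrete proof, but now exact rather than up to an $\econst^{f}-1$ correction — the continuous case is actually cleaner, with no $4M$ error term). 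This is the continuous analogue of \eqref{eqt:discrete_step_1}.

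Next I would bound $\E_\mu[f\,G_t]$ by the same Poincar\'e manoeuvre as in \eqref{eqt:discrete_step_2}: since $\E_\mu[f]=0$, write $\E_\mu[f\,G_t] = \E_\mu[f(G_t - \E_\mu[G_t])] \le \Var_\mu[f]^{1/2}\Var_\mu[G_t]^{1/2} \le \frac{\sigma}{\eta_p}\big(\E_\mu[(\mL G_t)^2]\big)^{1/2}$, invoking \eqref{eqt:2ndPoincare_continuous} and $\Var_\mu[f]\le\sigma^2$. Then I would control $\mL G_t$ pointwise using Lemma~\ref{lem:L_of_G}: from $\mL G_t(z) = \E[(f(Z_t)-f(Z_0))\econst^{F(Z_{[0,t]})}\,|\,Z_0=z]$ and $|f|\le M$ we get $|\mL G_t(z)| \le 2M\,\E[\econst^{F(Z_{[0,t]})}\,|\,Z_0=z] = 2M\,G_t(z)$, hence $\E_\mu[(\mL G_t)^2] \le 4M^2\,\E_\mu[G_t^2]$. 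Finally, decompose $\E_\mu[G_t^2] = \Var_\mu[G_t] + a(t)^2 \le \eta_p^{-2}\E_\mu[(\mL G_t)^2] + a(t)^2 \le (4M^2/\eta_p^2)\E_\mu[G_t^2] + a(t)^2$, and solve to get $\E_\mu[G_t^2] \le a(t)^2/c^2$ under $c^2 = 1 - 4M^2/\eta_p^2 > 0$. Chaining these bounds yields $\big(\E_\mu[(\mL G_t)^2]\big)^{1/2} \le (2M/c)\,a(t)$, so that $a'(t) = \E_\mu[f\,G_t] \le \frac{2\sigma M}{c\,\eta_p}\,a(t)$.

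With the differential inequality $a'(t) \le \frac{2\sigma M}{c\,\eta_p}\,a(t)$ in hand, Gronwall's inequality (together with $a(0) = \E_\mu[\econst^0] = 1$) immediately gives $a(t) \le \exp\big(\frac{2\sigma M\,t}{c\,\eta_p}\big)$, which is exactly the claimed bound. The only technical subtlety I anticipate is justifying the differentiability of $a(t)$ and the interchange of $\frac{\diff}{\diff\tau}$ with the conditional expectation used to derive $a'(t) = \E_\mu[f\,G_t]$ — this is the same smoothness issue already present (and implicitly used) in the proof of Lemma~\ref{lem:L_of_G}, and it is controlled because $|f|\le M$ bounds $\econst^{F(Z_{[0,t]})}$ uniformly on compact time intervals, making dominated convergence applicable. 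Everything else is a routine transcription of the discrete argument; in fact the continuous version is slightly shorter since the $\int_0^1(\econst^{M(2-s)}-1)\idiff s$ correction term disappears, which is why the exponent here is $2\sigma M t/(c\eta_p)$ rather than $\sigma M t(2+6\eta_p)/(c\eta_p)$.
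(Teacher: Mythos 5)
Your proposal is correct and follows essentially the same route as the paper: differentiate $a(t)$, convert $\E_\mu[f(Z_t)\econst^{F(Z_{[0,t]})}]$ to $\E_\mu[f\,G_t]$ via Lemma \ref{lem:L_of_G} and $\E_\mu[\mL G_t]=0$, apply Cauchy--Schwarz with the iterated Poincar\'e inequality, use the pointwise bound $|\mL G_t|\leq 2M G_t$ and the self-bounding estimate $\E_\mu[G_t^2]\leq a(t)^2/c^2$, and close with Gronwall. Your observation that the discrete-time correction term $\int_0^1(\econst^{M(2-s)}-1)\idiff s$ disappears in continuous time, yielding the cleaner exponent $2\sigma M t/(c\,\eta_p)$, matches the paper exactly.
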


\begin{proof}
Note that $a(0)=1$. Taking the derivative of $a(t)$ (by using \eqref{eqt:weak_continuous}) for $t\geq 0$ gives 
\begin{equation}\label{eqt:continuous_step_1}
\begin{split}
a'(t) &= \E_{Z_0\sim\mu}\left[f(Z_t)\econst^{F\left(Z_{[0,t]}\right)}\right] \\
&= \E_{z\sim\mu}\left[\E\left[f(Z_t)\econst^{F\left(Z_{[0,t]}\right)}\,\big|\, Z_0 = z\right]\right]\\
&= \E_{z\sim\mu}\left[\E\left[f(Z_0)\econst^{F\left(Z_{[0,t]}\right)}\,\big|\, Z_0 = z\right]\right] + \E_{z\sim\mu}[\mL G_t(z)] \\
&= \E_{z\sim\mu}[f(z)G_t(z)].
\end{split}
\end{equation}
The third equality above follows from Lemma \ref{lem:L_of_G}, and the fourth equality follows from that $\E_\mu[\mL g]=0$ for any function $g$. Since $\E_\mu [f] = 0$, we have 
\begin{equation}\label{eqt:continuous_step_2}
\E_{\mu}\left[fG_t\right] = \E_{\mu}\left[f(G_t-\E_{\mu}[G_t])\right]\leq \Var_\mu[f]^{1/2}\Var_\mu[G_t]^{1/2} \leq \frac{\sigma}{\eta_p} \left(\E_\mu\big[(\mL G_t)^2\big]\right)^{1/2},
\end{equation}
where the last inequality follows from \eqref{eqt:2ndPoincare_continuous}. Using Lemma \ref{lem:L_of_G}, we have
\begin{align*}
\E_\mu\left[(\mL G_t)^2\right] &= \E_{z\sim\mu}\left[\left(\E\left[(f(Z_t)-f(Z_0))\econst^{F\left(Z_{[0,t]}\right)}\,|\, Z_0 = z\right]\right)^2\right]\\
&\leq 4M^2\E_{z\sim\mu}\left[\left(\E\left[\econst^{F\left(Z_{[0,t]}\right)}\,|\, Z_0 = z\right]\right)^2\right] = 4M^2\E_\mu\big[G_t^2\big].
\end{align*}
To continue, we compute that 
\[\E_\mu\big[G_t^2\big]= \Var_\mu[G_t] + \E_\mu[G_t]^2 \leq  \frac{1}{\eta_p^2}\E_\mu\left[(\mL G_t)^2\right] + a(t)^2 \leq  \frac{4M^2}{\eta_p^2}\E_\mu\big[G_t^2\big] + a(t)^2.\]
Since $c^2= 1 - 4M^2/\eta_p^2>0$, we deduce $\E_\mu[G_t^2]\leq a(t)^2/c^2$. Finally, we have
\[\left(\E_\mu\big[(\mL G_t)^2\big]\right)^{1/2}\leq 2M\left(\E_\mu\big[G_t^2\big]\right)^{1/2}\leq \frac{2M}{c}\cdot a(t).\]
It then follows from \eqref{eqt:continuous_step_1} and \eqref{eqt:continuous_step_2} that
\[a'(t)\leq \frac{2\sigma M}{c\eta_p}\cdot a(t),\]
and therefore
\[a(t)\leq a(0) \exp\left(\frac{2\sigma M\,t}{c\,\eta_p}\right) = \exp\left(\frac{2\sigma M\,t}{c\,\eta_p}\right),\quad t\geq 0.\]
This completes the proof.
\end{proof}

We can then bound the mgf of $\int_0^tf(Z_s)\idiff s$ for sufficiently small $\theta$ again by replacing $f$ with $\theta f$ in Proposition \ref{prop:bound_of_a_t}, and we omit the obvious proof. 

\begin{corollary}\label{cor:continuous_mgf_bound}
Let $f: \Omega\to \mathbb{R}$ be a function such that $\E_\mu [f] = 0$, $|f|\leq M$, and $\Var_\mu[f]\leq \sigma^2$. For any $\theta\in \R$ with $|\theta|<\eta_p/(2M)$, it holds that 
\begin{equation}
\E_{Z_0\sim\mu}\left[\econst^{\theta F\left(Z_{[0,t]}\right)}\right]\leq \exp\left(\frac{2\sigma M\theta^2\,t}{c(\theta)\,\eta_p}\right),
\end{equation}
where $c(\theta):=(1-4\theta^2M^2/\eta_p^2)^{1/2}$.
\end{corollary}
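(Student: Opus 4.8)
The plan is to deduce this bound from the preceding proposition by the same rescaling trick used for Corollary~\ref{cor:discrete_mgf_bound} in the discrete-time case, rather than by redoing the Gronwall argument. Fix $\theta\in\R$ with $|\theta|<\eta_p/(2M)$. The case $\theta=0$ is trivial, since both sides then equal $1$, so I may assume $\theta\neq 0$. Set $g:=\theta f$. Then $g$ satisfies $\E_\mu[g]=0$, $|g|\leq |\theta|M=:M'$, and $\Var_\mu[g]\leq \theta^2\sigma^2=:\sigma'^2$, and the hypothesis $|\theta|<\eta_p/(2M)$ is precisely the statement that $1-4M'^2/\eta_p^2>0$, so the smallness condition required by the proposition is met for $g$.

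Next I would note that the functional $F$ is linear in its integrand, so with $g$ in place of $f$ one has $F\bigl(Z_{[0,t]}\bigr)=\int_0^t g(Z_s)\idiff s=\theta\int_0^t f(Z_s)\idiff s$; hence the quantity $a(t)$ built from $g$ equals $\E_{Z_0\sim\mu}\bigl[\econst^{\theta F(Z_{[0,t]})}\bigr]$ with $F$ built from $f$. Applying the previous proposition to $g$ (valid since $1-4M'^2/\eta_p^2>0$) then gives
\[\E_{Z_0\sim\mu}\left[\econst^{\theta F\left(Z_{[0,t]}\right)}\right]\leq \exp\left(\frac{2\sigma'M'\,t}{c'\,\eta_p}\right),\qquad c':=\left(1-\frac{4M'^2}{\eta_p^2}\right)^{1/2}.\]
Finally I would simplify the exponent using $\sigma'M'=|\theta|\sigma\cdot|\theta|M=\theta^2\sigma M$ and $c'=\bigl(1-4\theta^2M^2/\eta_p^2\bigr)^{1/2}=c(\theta)$, which reproduces the claimed bound exactly.

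There is essentially no obstacle here; the only points requiring a moment's attention are that the sup-norm bound on $g$ is naturally written as $|\theta|M$ rather than $\theta M$, so that the argument covers negative $\theta$ as well, and that $c(\theta)$ is even in $\theta$, so the substitution $M\mapsto|\theta|M$ yields precisely $c(\theta)$. This is why the authors describe the proof as obvious and omit it.
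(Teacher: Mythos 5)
Your proof is correct and is exactly the argument the paper has in mind (and spells out for the discrete-time analogue, Corollary~\ref{cor:discrete_mgf_bound}): rescale $f$ to $g=\theta f$, check that $|\theta|<\eta_p/(2M)$ gives the smallness condition $1-4|\theta|^2M^2/\eta_p^2>0$, and apply the preceding proposition to $g$. The care you take with $|\theta|$ versus $\theta$ for negative $\theta$ and the trivial case $\theta=0$ is a correct, if minor, refinement of what the paper leaves implicit.
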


\subsection{Bernstein-type tail bound}
With a bound on the mgf of $\int_0^tf(Z_s)\idiff s$ in hand, we can establish a Bernstein-type inequality for continuous-time Markov processes in the next theorem.

\begin{theorem}\label{thm:Bernstein_continuous}
Let $(Z_t)_{t\geq 0}$ be a Markov process on $\Omega$ with an invariant distribution $\mu$. Suppose that the corresponding Markov semigroup $(\mtx{P}_t)_{t\geq 0}$ admits an IP gap $\eta_p>0$. Let $\nu$ be the initial distribution of $(Z_t)_{t\geq0}$, i.e. $Z_0\sim\nu$, such that $\nu$ is absolutely continuous with respect to $\mu$ and that $\nu/\mu$ has a finite $p$-moment for some $p\in[1,+\infty]$. Let $f\in \Omega\to \mathbb{R}$ be a function such that $\E_\mu [f] = 0$, $|f|\leq M$, and $\Var_\mu[f]\leq \sigma^2$ for some $M,\sigma\in (0,+\infty)$. Then, for all $t\geq0$ and all $\delta\geq 0$, 
\begin{equation}
\Prob{\left|\frac{1}{t}\int_0^t f(Z_s)\idiff s\right| \geq \delta } \leq 2\|\nu/\mu\|_{L_{p,\mu}}\exp\left(-\frac{t\,\eta_p\,\delta^2}{4qM\sqrt{4\sigma^2+\delta^2}}\right),
\end{equation}
where $q=p/(p-1)\in[1,+\infty]$. 
\end{theorem}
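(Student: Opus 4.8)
The plan is to mimic the proof of Theorem~\ref{thm:Bernstein_discrete} almost verbatim, replacing the discrete recursion/unrolling step with the Gronwall bound already established in the continuous setting. First I would fix $\theta>0$ and apply Markov's inequality to the exponential: $\Prob{\frac1t\int_0^t f(Z_s)\idiff s \geq \delta} \leq \econst^{-\theta t\delta}\,\E_{Z_0\sim\nu}[\econst^{\theta F(Z_{[0,t]})}]$. Then I would switch the expectation from $\nu$ to $\mu$ by inserting the Radon--Nikodym derivative $\nu/\mu$ and applying H\"older's inequality with exponents $p$ and $q=p/(p-1)$, exactly as in the discrete proof, to get
\[
\Prob{\tfrac1t\int_0^t f(Z_s)\idiff s \geq \delta} \leq \econst^{-\theta t\delta}\,\|\nu/\mu\|_{L_{p,\mu}}\left(\E_{Z_0\sim\mu}\left[\econst^{q\theta F(Z_{[0,t]})}\right]\right)^{1/q}.
\]

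Next I would invoke Corollary~\ref{cor:continuous_mgf_bound} with the parameter $q\theta$ in place of $\theta$, which is valid provided $q\theta < \eta_p/(2M)$, i.e. $c(q\theta)^2 = 1 - 4q^2\theta^2 M^2/\eta_p^2 > 0$. This gives
\[
\E_{Z_0\sim\mu}\left[\econst^{q\theta F(Z_{[0,t]})}\right] \leq \exp\left(\frac{2\sigma M q^2\theta^2 t}{c(q\theta)\,\eta_p}\right),
\]
so raising to the power $1/q$ produces an exponent of $2\sigma M q\theta^2 t/(c(q\theta)\eta_p)$. Combining, the bound on the one-sided deviation probability is $\|\nu/\mu\|_{L_{p,\mu}}\exp\bigl(-t\theta\delta + 2\sigma M q\theta^2 t/(c(q\theta)\eta_p)\bigr)$.

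Then comes the optimization over $\theta$: I would choose
\[
\theta = \frac{\delta\,\eta_p}{2qM\sqrt{4\sigma^2+\delta^2}},
\]
the natural analogue of the discrete choice but with $(2+6\eta_p)^2$ replaced by $4$, since in the continuous case the factor appearing in Corollary~\ref{cor:continuous_mgf_bound} is just $2$ rather than $2+6\eta_p$. A short computation then shows $c(q\theta) = 2\sigma/\sqrt{4\sigma^2+\delta^2}$, and substituting back collapses $-t\theta\delta + 2\sigma M q\theta^2 t/(c(q\theta)\eta_p)$ to $-t\eta_p\delta^2/\bigl(4qM\sqrt{4\sigma^2+\delta^2}\bigr)$; here one checks $c(q\theta)^2>0$ automatically since $\delta\geq 0$ forces $c(q\theta)\in(0,1]$. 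Finally I would repeat the argument verbatim with $-f$ in place of $f$ to bound $\Prob{\frac1t\int_0^t f(Z_s)\idiff s \leq -\delta}$, and a union bound over the two events yields the stated inequality with the factor of $2$.

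I do not expect a genuine obstacle: the only place requiring care is bookkeeping the parameter substitution $\theta \mapsto q\theta$ and verifying that the chosen $\theta$ keeps $|q\theta| < \eta_p/(2M)$, which is immediate from the explicit formula since $\delta/\sqrt{4\sigma^2+\delta^2}<1$. The slight discrepancy with the discrete bound (no $\eta_p$-dependent constant inside the square root) traces entirely to the cleaner continuous-time moment bound, where the increment $f(Z_t)-f(Z_0)$ contributes a factor $2M$ rather than the $4M$ that arose from the discrete step~\eqref{eqt:discrete_step_1}.
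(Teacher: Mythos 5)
Your proposal is correct and follows essentially the same route as the paper's own proof: Chernoff bound, change of measure via H\"older with exponents $p,q$, application of Corollary~\ref{cor:continuous_mgf_bound} at $q\theta$, the same choice $\theta=\delta\eta_p/(2qM\sqrt{4\sigma^2+\delta^2})$ giving $c(q\theta)=2\sigma/\sqrt{4\sigma^2+\delta^2}$, and doubling for the two-sided bound. The parameter bookkeeping and the verification $q\theta<\eta_p/(2M)$ match the paper's argument, so there is nothing to add.
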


\begin{proof} For any $\theta>0$, we have
\begin{align*}
\Prob{\frac{1}{t}\int_0^tf(Z_s)\idiff s \geq \delta } &\leq \econst^{-\theta t\delta} \E_{Z_0\sim\nu}\left[\econst^{\theta F\left(Z_{[0,t]}\right)}\right]\\
&= \econst^{-\theta t\delta} \E_{Z_0\sim\mu}\left[(\nu(Z_0)/\mu(Z_0))\econst^{\theta F\left(Z_{[0,t]}\right)}\right]\\
&\leq \econst^{-\theta t\delta} \|\nu/\mu\|_{L_{p,\mu}}\left(\E_{Z_0\sim\mu}\left[\econst^{q\theta F\left(Z_{[0,t]}\right)}\right]\right)^{1/q}.
\end{align*}
Corollary \ref{cor:continuous_mgf_bound} implies 
\[\E_{Z_0\sim\mu}\left[\econst^{q\theta F\left(Z_{[0,t]}\right)}\right]\leq  \exp\left(\frac{2tM\sigma q^2\theta^2}{c(q\theta)\,\eta_p}\right),\]
given that $c(q\theta)^2=1-4q^2\theta^2M^2/\eta_p^2> 0$. In particular, by choosing 
\[\theta = \frac{\delta\eta_p}{2qM\sqrt{4\sigma^2+\delta^2}},\]
which gives
\[c(q\theta) = \frac{2\sigma}{\sqrt{4\sigma^2+\delta^2}},\]
we obtain
\begin{align*}
\Prob{\frac{1}{t}\int_0^tf(Z_s)\idiff s \geq \delta } &\leq  \|\nu/\mu\|_{L_{p,\mu}}\exp\left(- t\delta \theta + \frac{2tM \sigma q \theta^2}{c(q\theta)\,\eta_p}\right) \\
&=  \|\nu/\mu\|_{L_{p,\mu}}\exp\left(-\frac{t\,\eta_p\,\delta^2}{4qM\sqrt{4\sigma^2+\delta^2}}\right).
\end{align*}
The claimed result then follows by doubling the bound to cover the whole deviation probability. 
\end{proof}

\subsection*{Acknowledgements} The authors are supported by the National Key R\&D Program of China under the grant 2021YFA1001500. We are very grateful for the constructive comments on this paper from Joel Tropp, Sam Power, and anonymous referees.

\appendix 

\section{Comparison of different spectral gaps}\label{sec:simple_example}
This section compares the IP gap required for our Bernstein-type inequality with other spectral gaps appearing in the existing literature. To start, we say a Markov transition operator $\mtx{P}$ admits a nonzero \textit{absolute spectral gap} $\eta_a=1-\lambda_a$ if 
\begin{equation}
\lambda_a:= \sup _{h \in L_{2,\mu}^0,\ h \neq \mtx{0}} \frac{\|\mtx{P} h\|_{\mu}}{\|h\|_{\mu}}<1.
\end{equation}
To the best of our knowledge, most of the existing results about concentration inequalities for non-reversible Markov chains require a nonzero absolute spectral gap of the transition operator. This condition, however, may fail in some applications. As a generalization of the absolute spectral gap, Paulin \cite{paulin2015concentration} introduced the \textit{pseudo spectral gap} that is defined as
\begin{equation}
\eta_{ps} := \sup_{k\in \N_+}\frac{1- \lambda_{a,k}^2}{k},
\end{equation}
where 
\begin{equation}
\lambda_{a,k} := \sup _{h \in L_{2,\mu}^0,\ h \neq \mtx{0}}  \frac{\|\mtx{P}^k h\|_{\mu}}{\|h\|_{\mu}}.
\end{equation}
Note that by Jensen's inequality,
\begin{equation}
\|\mtx{P} h\|_{\mu}^2=\sum\limits_{u\in \Omega}\mu(u)\left(\sum\limits_{v\in\Omega} p(u,v)h(v)\right)^2 \leq \sum\limits_{u\in \Omega}\mu(u)\sum\limits_{v\in\Omega}p(u,v) h(v)^2 = \|h\|_{\mu}^2,
\end{equation}
and thus we have $\lambda_{a,k}\leq \lambda_a^k\leq 1$ for all $k\in \N_+$.

Another quantity appearing in the literature is the \textit{symmetric spectral gap} (also called the right spectral gap in \cite{jiang2018bernstein,fan2021hoeffding}), which is defined to be the spectral gap of the additive symmetrization of $\mtx{P}$. To be specific, let $\mtx{P}^*$ be the adjoint of $\mtx{P}$ with respect to the $L_{2,\mu}$ inner product, and let $\mtx{A}:=(\mtx{P}+\mtx{P}^*)/2$ be the additive symmetrization of $\mtx{P}$. We say $\mtx{P}$ admits a nonzero \textit{symmetric spectral gap} $\eta_s=1-\lambda_s$ if 
\begin{equation}
\lambda_s:=\sup _{h \in L_{2,\mu}^0,\ h \neq \mtx{0}} \frac{\langle h, \mtx{P}h \rangle_{\mu}}{ \|h\|_{\mu}^2}= \sup _{h \in L_{2,\mu}^0,\ h \neq \mtx{0}} \frac{\langle h, \mtx{A}h \rangle_{\mu}}{ \|h\|_{\mu}^2}<1.
\end{equation}
This quantity is employed to develop concentration inequality for general Markov chains in \cite{fan2021hoeffding,jiang2018bernstein}. However, their proof seems to have some technical mistake, which we will discuss in the next section. 

Here, we argue that for any transition operator $\mtx{P}$, the relations $\eta_p\geq \eta_s\geq \eta_a$ and $\eta_p\geq \eta_{ps}/2$ always hold. The proof of $\eta_p\geq \eta_{ps}/2$ is due to Chatterjee \cite{chatterjee2025spectral}, and we still provide the proof below for the reader's convenience. As a direct consequence, our Bernstein-type inequality also holds if $\mtx{P}$ admits a nonzero absolute spectral gap, a nonzero pseudo spectral gap, or a nonzero symmetric spectral gap.

\begin{lemma}\label{lem:gap_comparison}
It always holds that $\eta_p\geq \eta_s\geq \eta_a$ and $\eta_p\geq \eta_{ps}/2$.
\end{lemma}

\begin{proof}
For any $h\in L_{2,\mu}^0$, by the Cauchy--Schwarz inequality,
\begin{equation}
1 - \frac{\langle h, \mtx{P}h \rangle_{\mu}}{ \|h\|_{\mu}^2} =\frac{\langle h, (-\mtx{L})h \rangle_{\mu}}{\|h\|_{\mu}^2} \leq \frac{\|\mtx{L}h\|_{\mu}}{\|h\|_{\mu}}.
\end{equation}
Taking the infimum over all $h\in L_{2,\mu}^0$, we get $\eta_s\leq \eta_p$. Similarly, since
\begin{equation}
\frac{\langle h, \mtx{P}h \rangle_{\mu}}{ \|h\|_{\mu}^2}\leq \frac{\|\mtx{P}h\|_{\mu}}{\|h\|_{\mu}}
\end{equation}
holds for any $h\in L_{2,\mu}^0$, we deduce $\lambda_s\leq \lambda_a$, and therefore $\eta_s\geq \eta_a$.

To prove $\eta_p\geq \eta_{ps}/2$, we first note that for any $k\in \N_+$,
\[\|(\mtx{P}^{k-1}-\mtx{P}^k)h\|_\mu = \|\mtx{P}^{k-1}(\Id - \mtx{P})h\|_\mu\leq \|(\Id - \mtx{P})h\|_\mu = \|\mtx{L}h\|_\mu.\]
Then, for any $h\in L_{2,\mu}^0$ and any $k\in \N_+$, we can compute by triangle inequality and telescoping that 
\[1 - \frac{\|\mtx{P}^kh\|_\mu}{\|h\|_\mu}\leq \frac{\|(\Id-\mtx{P}^k)h\|_\mu}{\|h\|_\mu}\leq \frac{1}{{\|h\|_\mu}}\suml_{i=1}^k\|(\mtx{P}^{i-1}-\mtx{P}^i)h\|_\mu\leq k\frac{\|\mtx{L}h\|_\mu}{{\|h\|_\mu}}.\]
It follows by Taking the infimum over all $h\in L_{2,\mu}^0$ that $1-\lambda_{a,k} \leq k\eta_p$, and thus
\[\eta_{ps} = \sup_{k\in \N_+}\frac{1- \lambda_{a,k}^2}{k} \leq  \sup_{k\in \N_+}\frac{2(1-\lambda_{a,k})}{k}\leq 2\eta_p.\]
We have used above the fact that $\lambda_{a,k}\leq 1$ for all $k\in \N_+$.
\end{proof}

When the number of states $|\Omega|$ is finite, $\lambda_s$ is the second largest eigenvalue of the matrix $\mtx{A}$. Moreover, if the transition matrix $\mtx{P}$ is irreducible, then it is not hard to see that $\mtx{A}$ is also an irreducible transition matrix. Therefore, by the Perron--Frobenius theorem, we always have $\lambda_s<1$, and thus $\eta_p\geq\eta_s=1-\lambda_s>0$. However, irreducibility does not guarantee a uniform lower bound of the ratios $\eta_a/\eta_p$ and $\eta_{ps}/\eta_p$. Below are some simple examples demonstrating the superiority of the IP gap.

\begin{example}
Chatterjee provided in \cite{chatterjee2025spectral} an example demonstrating that the ratio $\eta_a/\eta_p$ can be arbitrarily small. Consider the one dimensional random walk on the periodic grid $\Z/n\Z$ where the probability of not moving and that of moving one step to the right are both $1/2$. It is then not hard to check that the IP gap $\eta_p$ of this Markov chain is of order $1/n$, while the symmetric spectral gap $\eta_s$ and the absolute spectral $\eta_a$ are both of order $1/n^2$.
\end{example}

\begin{example} 
The next example (modified from Chatterjee's random walk example) also shows the superiority of the IP gap over the other gaps that have been used to establish concentration inequalities for non-reversible Markov chains. Consider a parameterized family of transition matrices,
\begin{equation}
\mtx{P}_0=\begin{bmatrix}
        0 & 1 & 0 \\
        0 & 0 & 1 \\
        1 & 0 & 0 \\
\end{bmatrix}
\quad\text{and}\quad
\mtx{P}_\varepsilon = \varepsilon \Id + (1-\varepsilon)\mtx{P}_0,\quad \varepsilon \in[0,1].
\end{equation}
Suppose $\varepsilon\ll 1$. One can easily show by a straightforward calculation and the spectral perturbation theory that 
\[\eta_p(\mtx{P}_\varepsilon) = (1-\varepsilon)\sqrt{3},\quad \eta_a(\mtx{P}_\varepsilon) = O(\varepsilon),\quad \eta_{ps}(\mtx{P}_\varepsilon) = O(\varepsilon).\]
In particular, $\eta_p(\mtx{P}_0)=\sqrt{3}$ and $\eta_a(\mtx{P}_0) = \eta_{ps}(\mtx{P}_0)=0$.
\end{example}

\begin{example}
Here, we give another simple example where $\eta_p>0$ but $\eta_a=0$. Consider the transition matrix
\begin{equation}
\mtx{P}=\begin{bmatrix}
        1/2& 1/2 &0    &0  \\
        0  &   0 &1/2  &1/2\\
        1/2& 1/2 &0    &0  \\
        0  &   0 &1/2  &1/2\\
\end{bmatrix}.
\end{equation}
One can check that $\mtx{P}$ is irreducible and has a unique invariant distribution $\mu=(1/4,1/4,1/4,1/4)$. By definition, $\lambda_a$ is the second largest singular value of the matrix $\mtx{D}_\mu^{1/2}\mtx{P}\mtx{D}_\mu^{-1/2}$, where $\mtx{D}_\mu$ is the diagonal matrix with the elements of $\mu$ on the diagonal. Since $\mtx{D}_\mu$ is a multiple of the identity matrix, $\mtx{D}_\mu^{1/2}\mtx{P}\mtx{D}_\mu^{-1/2}=\mtx{P}$. A direct calculation shows that the largest two singular values of $\mtx{P}$ are both $1$, and therefore the absolute spectral gap of $\mtx{P}$ is 0. 
\end{example}

\section{Potential proof gap in two related works}\label{sec:gap}
In \cite{fan2021hoeffding,jiang2018bernstein}, the authors rely on the following lemma to establish concentration inequalities for non-reversible Markov chains with a nonzero symmetric spectral gap $\eta_s$ (i.e. the right spectral gap in their notation):

\begin{lemma}[Fan, Jiang and Sun(2021), Lemma 11]\label{lem:bound_operator_norm}
Let $\left\{Z_k\right\}_{k \geq 1}$ be a Markov chain with invariant measure $\mu$ and right spectral gap $1-\lambda_s>0$. Then for any bounded function $f$ and any $t \in \mathbb{R}$,
\begin{equation}
\mathbb{E}_\mu\left[\econst^{t \sum_{k=1}^n f\left(Z_k\right)}\right] \leq\left\| E^{t f / 2} \hat{\mtx{P}}_{\max \left\{\lambda_s, 0\right\}} E^{t f / 2}\right\|_{\mu}^n.
\end{equation} 
\end{lemma}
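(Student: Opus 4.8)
The natural route is operator-theoretic, in the spirit of \cite{lezaud1998chernoff,paulin2015concentration}. Write $Q := E^{tf/2}$ for the self-adjoint, positive operator of multiplication by $\econst^{tf/2}$ on $L_{2,\mu}$, so that $E^{tf}=Q^2$, and let $\hat{\mtx{P}}_\lambda$ denote the idealized reversible operator appearing in the lemma, which we take to fix constants and act as $\lambda\cdot\mathrm{Id}$ on $L_{2,\mu}^0$. First I would record the standard identity obtained by peeling off conditional expectations along the chain started from $Z_1\sim\mu$:
\[
\E_\mu\!\left[\econst^{t\sum_{k=1}^n f(Z_k)}\right] \;=\; \bigl\langle \mathbf{1},\; Q^2\mtx{P}Q^2\mtx{P}\cdots Q^2\,\mathbf{1}\bigr\rangle_\mu \;=\; \bigl\langle Q\mathbf{1},\;(Q\mtx{P}Q)^{\,n-1}\,Q\mathbf{1}\bigr\rangle_\mu,
\]
with $n$ factors $Q^2$ and $n-1$ factors $\mtx{P}$. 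Cauchy--Schwarz then yields the upper bound $\E_\mu[\econst^{tf}]\,\|Q\mtx{P}Q\|_\mu^{\,n-1}$, so it remains to (a) replace $\mtx{P}$ by $\hat{\mtx{P}}_{\max\{\lambda_s,0\}}$ inside the operator norm, and (b) absorb the endpoint factor $\E_\mu[\econst^{tf}]$ so as to consolidate into the stated $n$-th power, using $\hat{\mtx{P}}_\lambda\mathbf{1}=\mathbf{1}$.

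For step (a) the natural move is a quadratic-form comparison. Given $g\in L_{2,\mu}$, decompose $Qg=c\mathbf{1}+w$ with $w\in L_{2,\mu}^0$; then $\mtx{P}\mathbf{1}=\mathbf{1}$, the invariance $\E_\mu[\mtx{P}w]=\E_\mu[w]=0$, and the symmetric-gap estimate $\langle w,\mtx{P}w\rangle_\mu=\langle w,\tfrac{1}{2}(\mtx{P}+\mtx{P}^*)w\rangle_\mu\le\max\{\lambda_s,0\}\,\|w\|_\mu^2$ give
\[
\bigl\langle g,\,Q\mtx{P}Q\,g\bigr\rangle_\mu \;=\; \langle Qg,\mtx{P}Qg\rangle_\mu \;=\; c^2+\langle w,\mtx{P}w\rangle_\mu \;\le\; c^2+\max\{\lambda_s,0\}\,\|w\|_\mu^2 \;=\; \bigl\langle g,\,Q\hat{\mtx{P}}_{\max\{\lambda_s,0\}}Q\,g\bigr\rangle_\mu .
\]
So $Q\hat{\mtx{P}}_{\max\{\lambda_s,0\}}Q-Q\mtx{P}Q$ has nonnegative quadratic form, and one would then like to promote this to the operator-norm inequality $\|Q\mtx{P}Q\|_\mu\le\|Q\hat{\mtx{P}}_{\max\{\lambda_s,0\}}Q\|_\mu$ and finish, since $Q\hat{\mtx{P}}_\lambda Q$ is self-adjoint and positive so its powers are easy to handle and the endpoint factor can be folded in.

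The main obstacle is precisely that promotion. When $\mtx{P}$ is nonreversible, $Q\mtx{P}Q$ is not self-adjoint, and the $(n-1)$-step estimate genuinely requires its operator norm $\|Q\mtx{P}Q\|_\mu=\lambda_{\max}\!\bigl(Q\mtx{P}^*Q^2\mtx{P}Q\bigr)^{1/2}$, which depends on $\mtx{P}^*Q^2\mtx{P}$ rather than on the symmetrization of $\mtx{P}$; a bound on the numerical range of $Q\mtx{P}Q$ (equivalently on the top eigenvalue of its symmetric part) does not control that quantity — already for a nearly ``rotation-type'' $\mtx{P}$ the iterates $(Q\mtx{P}Q)^m$ can be large while the symmetric part is tiny. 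A direct fix would need the much stronger Loewner inequality $\mtx{P}^*Q^2\mtx{P}\preceq\hat{\mtx{P}}_\lambda Q^2\hat{\mtx{P}}_\lambda$, i.e. $\|Q\mtx{P}g\|_\mu\le\lambda\|Qg\|_\mu$ for every mean-zero $g$, which is an absolute-type rather than a symmetric-type bound (and moreover carries the extra weight $Q$), so it is not implied by a nonzero symmetric spectral gap. Without some additional structural input — an invariant cone, or a positivity-preservation argument that makes numerical-range control suffice — the argument stalls here, and the same issue also clouds the passage from the $(n-1)$-st to the $n$-th power. I therefore expect this to be the crux; as the discussion below explains, it is exactly where the proofs in \cite{fan2021hoeffding,jiang2018bernstein} appear to be incomplete.
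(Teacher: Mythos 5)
You have not proved the lemma, and in this case that is the expected outcome: the paper does not prove it either. The statement is quoted verbatim from Fan, Jiang and Sun (2021, Lemma 11), and the whole purpose of the appendix in which it appears is to argue that the published proof of it seems to contain a gap. Your diagnosis of where the argument stalls is essentially the paper's point, seen from a slightly different angle. Your step (a) shows that the symmetric spectral gap controls the real quadratic form, i.e.\ the numerical range, of $Q\mtx{P}Q$ on $L_{2,\mu}$; the published proof then needs to pass from $\bigl|\langle (Q\mtx{P}Q)^{n-1}h,h\rangle_\mu\bigr|$ to $w(Q\mtx{P}Q)^{n-1}$, which it does by invoking Pearcy's power inequality for the numerical radius. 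The paper's objection is that Pearcy's theorem is proved for \emph{complex} Hilbert spaces: his argument writes $x=\sum_j x_j$ with $x_j=\frac{1}{n}\prod_{k\neq j}(\Id-\omega_k\mtx{A})x$ built from complex roots of unity, so the $x_j$ need not be real, and over a real space the power inequality can fail outright (a skew-symmetric $\mtx{A}$ has $w(\mtx{A})=0$ while $w(\mtx{A}^n)$ can be positive for $n\geq 2$). If one instead complexifies, it is not addressed in the cited works why the complex numerical radius of the nonsymmetric operator $E^{tf/2}\mtx{P}E^{tf/2}$ should coincide with the real one, which is all that the symmetric-gap computation controls. Your framing --- that a numerical-range bound does not control the iterates of the non-self-adjoint operator $Q\mtx{P}Q$, and that a genuine fix would need an absolute-gap-type (Loewner) inequality rather than a symmetric-gap one --- is the same obstruction expressed on the Cauchy--Schwarz side rather than the numerical-radius side; this is precisely why the present paper abandons $\eta_s$-based operator arguments and instead proves its Bernstein bound through the iterated Poincar\'e inequality.

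Two smaller remarks. First, your Cauchy--Schwarz step actually demands control of the operator norm $\|Q\mtx{P}Q\|_\mu^{\,n-1}$, which is a strictly stronger requirement than the numerical-radius route taken by Fan--Jiang--Sun; so even if their power-inequality step were repaired, your particular reduction would still not close without further input. Second, your step (b), absorbing the endpoint factor $\E_\mu[\econst^{tf}]=\|Q\mathbf{1}\|_\mu^2$ into an $n$-th power of $\|Q\hat{\mtx{P}}_{\max\{\lambda_s,0\}}Q\|_\mu$, is asserted but not justified (testing against $\mathbf{1}$ only gives $\lambda_s\E_\mu[\econst^{tf}]+(1-\lambda_s)(\E_\mu[\econst^{tf/2}])^2$, which by Jensen is below $\E_\mu[\econst^{tf}]$); this is moot given that the main step already stalls, but it would need an argument in any completed write-up.
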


Here $\hat{\mtx{P}}_{\lambda}=\lambda \mtx{\Id}+(1-\lambda)\mtx{\Pi}$, where $\mtx{\Pi}$ is the projection onto the 1-dimensional subspace spanned by $\mtx{1}$, and $E^f$ is the multiplication operator of function $\econst^f$, i.e., $E^f: h \in \mathcal{L}_2(\mu) \mapsto=\econst^f h$.
A key point of their proof is the application of the power inequality for the numerical radius of operators:
\begin{equation}\label{eqt:fan2021hoeffding_keystep}
\begin{split}
\mathbb{E}_\mu\left[\econst^{t \sum_{k=1}^n f\left(Z_k\right)}\right] & =\left\langle \econst^{t f / 2},\left(E^{t f / 2} \mtx{P} E^{t f / 2}\right)^{n-1} \econst^{t f / 2}\right\rangle_{\mu} \\
&\leq \sup _{\|h\|_\mu=1}\left|\left\langle\left(E^{t f / 2} \mtx{P}  E^{t f / 2}\right)^{n-1} h, h\right\rangle_{\mu}\right|\left\|e^{t f / 2}\right\|_{\mu}^2\\
&\leq\left(\sup _{\|h\|_\mu=1}\left|\left\langle E^{t f / 2} \mtx{P}  E^{t f / 2} h, h\right\rangle_{\mu}\right|\right)^{n-1}\left\|\econst^{t f / 2}\right\|_{\mu}^2, 
\end{split}
\end{equation}
where the last inequality is claimed to follow from a result of Pearcy in \cite{pearcy1966elementary}.

\begin{theorem}[Power inequality, Pearcy (1966)]\label{thm:power_inequality}
Let $\mathcal{H}$ be a complex Hilbert space, and let $\mtx{A}$ be a bounded operator on $\mathcal{H}$, then for any integer $n$, 
\begin{equation}\label{power_inequality}
w(\mtx{A}^n)\leq \left[w(\mtx{A})\right]^n,
\end{equation}
where $ w(\mtx{A})$ is the numerical radius of $\mtx{A}$: $w(\mtx{A}):=\sup_{\|x\|=1}\left|\langle\mtx{A}x,x\rangle\right|$.
\end{theorem}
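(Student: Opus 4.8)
The plan is to run Pearcy's elementary argument, whose two ingredients are a reformulation of the numerical-radius bound in terms of resolvents and the partial-fraction expansion over $n$-th roots of unity. I would first normalize: since $w(\alpha\mtx{A})=|\alpha|\,w(\mtx{A})$, and since $w(\mtx{A})=0$ forces $\langle\mtx{A}x,x\rangle=0$ for all $x$ and hence $\mtx{A}=0$ by polarization, it suffices---after replacing $\mtx{A}$ by $\mtx{A}/w(\mtx{A})$ when $w(\mtx{A})>0$---to prove that $w(\mtx{A})\le1$ implies $w(\mtx{A}^{n})\le1$ for every $n\ge1$; unnormalizing then yields $w(\mtx{A}^{n})\le w(\mtx{A})^{n}$. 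I would also record the standard fact that the spectrum of a bounded operator lies in the closure of its numerical range, so that $w(\mtx{A})\le1$ puts $\sigma(\mtx{A})$ in the closed unit disk; by the spectral mapping theorem $\mtx{I}-z\mtx{A}^{n}$ is then invertible for every $|z|<1$, and if $|\zeta|<1$ then $\sigma(\zeta\mtx{A})$ lies in the open unit disk, so each $\mtx{I}-\omega^{k}\zeta\mtx{A}$ (where $\omega=e^{2\pi\iunit/n}$) is invertible as well.

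The identity at the heart of the argument is the following: if $\mtx{C}$ is bounded with $\mtx{I}-\mtx{C}$ invertible, then for every $x$, writing $y=(\mtx{I}-\mtx{C})^{-1}x$ (so that $\langle(\mtx{I}-\mtx{C})^{-1}x,x\rangle=\langle y,(\mtx{I}-\mtx{C})y\rangle$),
\[
\real\bigl\langle(\mtx{I}-\mtx{C})^{-1}x,\,x\bigr\rangle \;=\; \|y\|^{2}-\real\bigl\langle\mtx{C}y,\,y\bigr\rangle \;\ge\; \bigl(1-w(\mtx{C})\bigr)\|y\|^{2}.
\]
Two consequences: (i) if $w(\mtx{C})\le1$ (and $\mtx{I}-\mtx{C}$ is invertible), then the Hermitian part of $(\mtx{I}-\mtx{C})^{-1}$ is positive semidefinite; and (ii), applying this with $\mtx{C}=z\mtx{B}$, if $\mtx{I}-z\mtx{B}$ is invertible and the Hermitian part of $(\mtx{I}-z\mtx{B})^{-1}$ is positive semidefinite for every $|z|<1$, then plugging in $x=(\mtx{I}-z\mtx{B})y$ and taking the supremum over such $z$ forces $|\langle\mtx{B}y,y\rangle|\le\|y\|^{2}$ for all $y$, i.e.\ $w(\mtx{B})\le1$. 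I will apply (ii) to $\mtx{B}=\mtx{A}^{n}$.

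Now suppose $w(\mtx{A})\le1$, fix $|z|<1$, and write $z=\zeta^{n}$ with $|\zeta|<1$, so that $z\mtx{A}^{n}=(\zeta\mtx{A})^{n}$. The scalar partial-fraction identity $\tfrac{1}{1-\lambda^{n}}=\tfrac1n\sum_{k=0}^{n-1}\tfrac{1}{1-\omega^{k}\lambda}$---equivalently, after clearing denominators, the polynomial identity $1\equiv\tfrac1n\sum_{k=0}^{n-1}\prod_{j\ne k}(1-\omega^{j}\lambda)$, which holds because both sides have degree at most $n-1$ and agree at the $n$ points $\lambda=\omega^{-\ell}$ (using $\prod_{m=1}^{n-1}(1-\omega^{m})=n$)---lifts verbatim to the single commuting operator $\zeta\mtx{A}$, every denominator being invertible, and gives
\[
\bigl(\mtx{I}-(\zeta\mtx{A})^{n}\bigr)^{-1}=\frac1n\sum_{k=0}^{n-1}\bigl(\mtx{I}-\omega^{k}\zeta\mtx{A}\bigr)^{-1}.
\]
Taking Hermitian parts of both sides and applying consequence (i) with $\mtx{C}=\omega^{k}\zeta\mtx{A}$---for which $w(\omega^{k}\zeta\mtx{A})=|\zeta|\,w(\mtx{A})\le1$---shows that the Hermitian part of $(\mtx{I}-z\mtx{A}^{n})^{-1}$ is positive semidefinite. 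Since $|z|<1$ was arbitrary, consequence (ii) gives $w(\mtx{A}^{n})\le1$, which is what was wanted.

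The conceptual move is recognizing that the numerical-radius bound is equivalent to resolvent positivity; granted that, the passage from $n=1$ to general $n$ is pure linear algebra via the roots-of-unity partial fractions. I expect the only genuine care to lie in the bookkeeping that makes the operator partial-fraction identity legitimate---ensuring that every resolvent in sight is well defined and that the manipulations are valid for a single commuting operator---which is why I would establish the spectral containment $\sigma(\mtx{A})\subseteq\{|z|\le1\}$ first and keep the strict inequality $|\zeta|<1$ (hence $\sigma(\zeta\mtx{A})$ inside the open disk) throughout. A shorter but far less self-contained route would invoke Berger's theorem---$w(\mtx{A})\le1$ if and only if $\mtx{A}$ has a unitary $2$-dilation, i.e.\ $\mtx{A}^{m}=2\Proj\mtx{U}^{m}|_{\mathcal{H}}$ for all $m\ge1$ with $\mtx{U}$ unitary on some $\mathcal{K}\supseteq\mathcal{H}$---and observe that $\mtx{U}^{n}$ is then a unitary $2$-dilation of $\mtx{A}^{n}$, so $w(\mtx{A}^{n})\le1$ again; but reproving Berger's theorem is substantially more work than the argument sketched here.
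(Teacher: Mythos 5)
Your proof is correct, but it is a genuinely different argument from the one reproduced in the paper. The paper follows Pearcy's original, purely algebraic route: from the polynomial identities $1-z^n=\prod_k(1-\omega_k z)$ and $1=\frac1n\sum_j\prod_{k\ne j}(1-\omega_k z)$ one builds the decomposition $x=\sum_j x_j$ with $x_j=\frac1n\prod_{k\ne j}(\Id-\omega_k\mtx{A})x$, obtains $1-\langle\mtx{A}^n x,x\rangle=\sum_j\|x_j\|^2\bigl(1-\omega_j\langle\mtx{A}x_j,x_j\rangle/\|x_j\|^2\bigr)$, and concludes by rotating $\mtx{A}\mapsto\econst^{\iunit\theta}\mtx{A}$ and taking real parts; no resolvents, no invertibility, no spectral theory are needed. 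You instead recast $w(\mtx{A})\le 1$ as positivity of the Hermitian part of the resolvents $(\Id-z\mtx{A})^{-1}$ for $|z|<1$ (a Berger--Kato--type characterization), and transfer this positivity from $\mtx{A}$ to $\mtx{A}^n$ via the roots-of-unity partial-fraction identity $(\Id-(\zeta\mtx{A})^n)^{-1}=\frac1n\sum_k(\Id-\omega^k\zeta\mtx{A})^{-1}$; your normalization ($w(\mtx{A})=0\Rightarrow\mtx{A}=\mathbf{0}$ by polarization), your verification of the residues $c_k=1/n$, and your deduction of $w(\mtx{A}^n)\le1$ from resolvent positivity by plugging in $x=(\Id-z\mtx{A}^n)y$ and letting $|z|\to1$ all check out. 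The two proofs share the same combinatorial core — your cleared-denominator identity $1\equiv\frac1n\sum_k\prod_{j\ne k}(1-\omega^j\lambda)$ is exactly the identity Pearcy substitutes the operator into, and varying $z$ over the disk plays the role of his rotation by $\econst^{\iunit\theta}$ — but your version pays for its conceptual transparency with extra bookkeeping (the inclusion $\sigma(\mtx{A})\subseteq\overline{W(\mtx{A})}$, or alternatively a bounded-below argument, to justify every inverse), whereas Pearcy's avoids inverting anything and is in that sense more elementary. Both arguments, as you note and as the paper's closing remark stresses, are intrinsically complex-linear: yours through polarization and the complex parameter $z$, the paper's through the possibly non-real vectors $x_j$ — which is precisely the point the appendix is making against the real-space application in the cited works.
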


The proof of Lemma \ref{lem:bound_operator_norm} in \cite{fan2021hoeffding} relies on the fact that the $L_{2,\mu}$ space considered there is a real Hilbert space. However, the original proof of Theorem \ref{thm:power_inequality} in \cite{pearcy1966elementary} only works for complex Hilbert spaces, and the power inequality \eqref{power_inequality} in real Hilbert spaces may not be true in general. For instance, let $\mathcal{H}=\R^n$, and let $\mtx{A}$ be a skew-symmetric matrix, then $w(\mtx{A})=0$ but $w(\mtx{A}^n)$ could be positive when $n\geq 2$. Therefore, if the supremums in \eqref{eqt:fan2021hoeffding_keystep} are taken over the real $\mL_{2,\mu}$ space, then the following inequality (that leads to the last inequality in \eqref{eqt:fan2021hoeffding_keystep}) cannot be derived from Theorem \ref{thm:power_inequality} directly:
\begin{equation}
\sup _{\|h\|_{\mu}=1}\left|\left\langle\left(E^{t f / 2} \mtx{P} E^{t f / 2}\right)^{n-1} h, h\right\rangle_{\mu}\right|\leq\left(\sup _{\|h\|_{\mu}=1}\left|\left\langle E^{t f / 2} \mtx{P} E^{t f / 2} h, h\right\rangle_{\mu}\right|\right)^{n-1}.
\end{equation}
On the other hand, if the supremums are taken over the complex $\mL_{2,\mu}$ space, then it should be explained in \cite{fan2021hoeffding} why
\begin{equation}
\sup _{\substack{\|h\|_{\mu}=1\\h \text{ is complex}}}\left|\left\langle E^{t f / 2} \mtx{P}  E^{t f / 2} h, h\right\rangle_{\mu}\right|=\sup _{\substack{\|h\|_{\mu}=1\\h \text{ is real}}}\left|\left\langle E^{t f / 2} \mtx{P}  E^{t f / 2} h, h\right\rangle_{\mu}\right|
\end{equation}
given that $E^{t f / 2} \mtx{P}  E^{t f / 2}$ is not symmetric, because their follow-up argument requires $h$ to be a real-valued function (vector). However, the above issue is not addressed in \cite{fan2021hoeffding}, and hence there might be a gap in their proof.

\vspace{2mm}

Next, we present the original proof of Theorem \ref{thm:power_inequality} in \cite{pearcy1966elementary} for the reader's convenience. 

\begin{proof}[Proof of Theorem \ref{thm:power_inequality}]
First note the two polynomial identities
\begin{equation}
1-z^n=\prod\limits_{k=1}^n(1-\omega_kz)\quad \text{and}\quad
1=\frac{1}{n}\sum\limits_{j=1}^n\prod \limits_{\substack{k=1\\k\ne j}}^n(1-\omega_kz),
\end{equation}
where $\omega_k=\econst^{2\pi k\iunit/n}$. For any unit vector $x\in\mathcal{H}$, let
\[x_j=\frac{1}{n}\prod \limits_{\substack{k=1\\k\ne j}}^n(\mtx{\Id}-\omega_k\mtx{A})x.\] 
Then we have $x=\sum_{j=1}^n x_j$. We can compute that
\begin{equation}\label{eqt:key_lemma_step1}
\begin{split}
1-\left\langle\mtx{A}^n x,x\right\rangle &=\left\langle\left(\mtx{\Id}-\mtx{A}^n\right)x,x\right\rangle=\sum\limits_{j=1}^n\left\langle\left(\mtx{\Id}-\mtx{A}^n\right)x,x_j\right\rangle\\
&=\sum\limits_{j=1}^n\left\langle\left(\mtx{\Id}-\omega_j\mtx{A}\right)x_j,x_j\right\rangle=\sum\limits_{j=1}^n\|x_j\|^2\left(1-\omega_j\left\langle\frac{\mtx{A}x_j}{\|x_j\|},\frac{x_j}{\|x_j\|}\right\rangle\right).
\end{split}
\end{equation}
To prove \eqref{power_inequality}, it suffices to assume that $w(\mtx{A})\leq 1$ and to prove $w(\mtx{A}^n)\leq 1$. Replacing $\mtx{A}$ in \eqref{eqt:key_lemma_step1} by $\econst^{i\theta}\mtx{A}$, one obtains the following relation
\begin{equation}\label{eqt:key_lemma_step2}
1-\econst^{in\theta}\left\langle\mtx{A}^n x,x\right\rangle=\sum\limits_{j=1}^n\|x_j\|^2\left(1-\omega_j\econst^{i\theta}\left\langle\frac{\mtx{A}x_j}{\|x_j\|},\frac{x_j}{\|x_j\|}\right\rangle\right),\quad \text{for any}\,\ \theta\in \R.
\end{equation}
Since $\omega(\mtx{A})\leq 1$, the real part of each term on the right-hand side must be nonnegative, which implies $\operatorname{Re}(1-\econst^{in\theta}\langle\mtx{A}^n x,x\rangle)\geq 0$ for any $\theta\in\R$. The conclusion follows immediately.
\end{proof}

\begin{remark}
To prove a real version of Theorem \ref{thm:power_inequality}, one must only assume and use that $\left|\langle\mtx{A}x,x\rangle\right|\leq 1$ for all real $x$ with $\|x\|=1$. However, for $n\geq 3$, the polynomial $1-z^n$ always has complex roots, and thus the $x_j$ defined in the proof above cannot always be real in general. Therefore, one is not able to control $\langle \mtx{A}x_j,x_j\rangle$ on the right-hand side in \eqref{eqt:key_lemma_step2}.
\end{remark}

\bibliographystyle{myalpha}
\bibliography{reference}

\end{document}